\theoremstyle{plain} 
\newtheorem{theorem}{\sc Theorem}[section]
\newtheorem{lemma}[theorem]{\sc Lemma}
\newtheorem{corollary}[theorem]{\sc Corollary}
\newtheorem{proposition}[theorem]{\sc Proposition}
\theoremstyle{definition}
\newtheorem{definition}[theorem]{\sc Definition}
\newtheorem{remark}[theorem]{\sc Remark}
\newtheorem{example}[theorem]{\sc Example}
\newtheorem{remarks}[theorem]{\sc Remarks}
\newtheorem{examples}[theorem]{\sc Examples}
\newcommand\bC{{\mathbb C}}
\newcommand\bG{{\mathbb G}}
\newcommand\bP{{\mathbb P}}
\newcommand\bZ{{\mathbb Z}}
\newcommand\cG{{\mathcal G}}
\newcommand\cH{{\mathcal H}}
\newcommand\cK{{\mathcal K}}
\newcommand\cO{{\mathcal O}}
\newcommand\aff{{\rm aff}}
\newcommand\ant{{\rm ant}}
\newcommand\gp{{\rm gp}}
\newcommand\id{{\rm id}}
\newcommand\red{{\rm red}}
\newcommand\At{{\rm At}}
\newcommand\Aut{{\rm Aut}}
\newcommand\End{{\rm End}}
\newcommand\Ext{{\rm Ext}}
\newcommand\GL{{\rm GL}}
\newcommand\Hom{{\rm Hom}}
\newcommand\Int{{\rm Int}}
\newcommand\Ker{{\rm Ker}}
\newcommand\Lie{{\rm Lie}}
\newcommand\SL{{\rm SL}}
\newcommand\SO{{\rm SO}}
\newcommand\Spec{{\rm Spec}}
\def\address#1#2{\begingroup
\noindent\parbox[t]{7.8cm}{%
\small{\scshape\ignorespaces#1}\par\vskip1ex
\noindent\small{\itshape E-mail address}%
\/: #2\par\vskip4ex}\hfill%
\endgroup}
\title{Homogeneous bundles over abelian varieties}
\author{Michel Brion}
\date{}
\begin{document}

\maketitle

\footnote{ %2010 MSC numbers
2010 \textit{Mathematics Subject Classification}: Primary 14L30; 
Secondary 14J60, 14K05, 20G15, 32L05.}

\begin{abstract}
We obtain characterizations and structure results for homogeneous principal bundles over abelian
varieties, that generalize work of Miyanishi and Mukai on homogeneous vector bundles. For this,
we rely on notions and methods of algebraic transformation groups, especially observable subgroups 
and anti-affine groups. 
\end{abstract}

%\tableofcontents

\section{Introduction}
\label{sec:introduction}

A vector bundle $E$ over an abelian variety $A$ is called homogeneous 
(or translation-invariant), if $E$ is isomorphic to it pull-back under any translation 
in $A$. This defines a remarkable class of vector bundles: for example, the homogeneous 
line bundles are exactly the algebraically trivial ones; they are parametrized by the 
dual abelian variety $\widehat{A}$. Also, vector bundles over an elliptic curve are close
to being homogeneous; specifically, each indecomposable vector bundle of degree $0$ is 
homogeneous (as follows from \cite{At57a}). 
This result does not extend to abelian varieties of higher dimensions, but the homogeneous 
vector bundles still admit a nice description in that generality. Namely, by work of 
Miyanishi and Mukai (see \cite{Mi73, Muk79}), the following conditions are equivalent
for a vector bundle $E$ over $A$:

\noindent
{\rm (i)} $E$ is homogeneous.

\noindent
{\rm (ii)} $E$ is an iterated extension of algebraically trivial line bundles.

\noindent
{\rm (iii)} $E \cong \bigoplus_i L_i \otimes U_i$, 
where each $L_i$ is an algebraically trivial line bundle, and each $U_i$ is a
unipotent vector bundle, i.e., an iterated extension of trivial line bundles.

Similar results had been obtained earlier by Matsushima and Morimoto in the setting of 
holomorphic vector bundles over complex tori. They also showed that the homogeneous bundles
are exactly those admitting a connection; then they admit an integrable connection 
(see \cite{Ma59, Mo59}, and \cite{Bi04, BG08} for more recent developments).

In the present article, we obtain somewhat analogous characterizations and structure results
for those principal bundles $\pi : X \to A$ under an algebraic group $G$, 
that are homogeneous in the same sense. This holds e.g. if $X$ is a commutative algebraic group; 
then $\pi$ is a homomorphism with kernel $G$. In fact, 
by a classical result of Rosenlicht and Serre (see \cite{Se59}), all principal bundles under 
the additive group are obtained in this way, as well as all homogeneous bundles under 
the multiplicative group; the latter bundles are parametrized by $\widehat{A}$. 

Actually, any homogeneous bundle under an arbitrary algebraic group may be obtained 
from an extension of $A$, in view of our main result:

\begin{theorem}\label{thm:main}
The following assertions are equivalent for a principal bundle $\pi: X \to A$ 
under an algebraic group $G$:

\smallskip

\noindent
{\rm (a)} $\pi$ is homogeneous.

\smallskip

\noindent
{\rm (b)} There exist a commutative subgroup scheme $H \subset G$ and an 
extension of commutative group schemes $0 \to H \to \cG \to A \to 0$
such that $\pi$ admits a reduction of structure group to the $H$-bundle 
$\cG \to A$. Moreover, there is a smallest such subgroup scheme $H$, 
up to conjugacy in $G$.

\smallskip

\noindent
{\rm (c)} The associated bundle $X/R_u(G) \to X/G_{\aff}$ is homogeneous,
where $R_u(G)$ denotes the unipotent radical, and $G_{\aff}$ the largest connected 
affine subgroup of $G$.

\smallskip

In characteristic $0$, these assertions are also equivalent to:

\smallskip

\noindent
{\rm (d)} $\pi$ admits a connection.

\smallskip

\noindent
{\rm (e)} $\pi$ admits an integrable connection.
\end{theorem}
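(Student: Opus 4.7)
The plan is to make the equivalence (a) $\Leftrightarrow$ (b) the backbone of the proof, since it carries the structural content; the remaining implications follow by combining it with the Chevalley structure theorem for $G$ and, in characteristic $0$, with standard facts about the Atiyah class. The central object is the group scheme $\cT$ parametrising pairs $(a,\varphi)$ with $a \in A$ and $\varphi: X \to t_a^* X$ a $G$-equivariant isomorphism; projection to $A$ makes $\cT$ a group scheme whose kernel is $\Aut^G(X)$, a twisted form of $G$. Homogeneity of $\pi$ is then equivalent to the projection $\cT \to A$ being faithfully flat.

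To produce (b) from (a), I would extract the largest anti-affine subgroup $\cG$ of $\cT^0$. Since anti-affine groups are commutative (a fundamental fact emphasised in the anti-affine literature underlying this paper), $\cG$ is commutative, and it must surject onto $A$ because $A$ is anti-affine and anti-affineness is preserved under quotients. Setting $H := \Ker(\cG \to A)$ gives a commutative subgroup scheme of $\Aut^G(X)$, which, after passing to a local trivialisation and conjugating, identifies up to conjugacy with a commutative subgroup of $G$; the reduction of structure group of $X$ to $H$ is then induced by the $\cG$-action on $X$. Minimality of $H$ should follow by taking $\cG$ to be the smallest anti-affine subgroup of $\cT^0$ whose image in $A$ is all of $A$. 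The reverse direction (b) $\Rightarrow$ (a) is essentially formal, since $\cG$ acts on $X$ covering translations of $A$ and preserving the $G$-bundle structure.

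For (c), I would invoke the Chevalley decomposition $R_u(G) \subset G_{\aff} \subset G$ and observe that both homogeneity and the reduction in (b) behave functorially with respect to associated bundles, so the problem reduces to showing that the structural subgroup for $X/R_u(G) \to X/G_{\aff}$ lifts compatibly to the structural subgroup for $X \to A$. In characteristic $0$, (e) $\Rightarrow$ (d) is trivial; (b) $\Rightarrow$ (e) follows because any commutative extension of an abelian variety by $H$ admits a canonical translation-invariant integrable connection (the Lie algebra sequence splits and its curvature vanishes automatically); and (d) $\Rightarrow$ (a) uses the Atiyah class, since a connection lifts tangent vectors on $A$ to the Atiyah algebroid and thus trivialises the obstruction to extending the translation action of $A$ to $X$.

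The principal obstacle is the passage from commutative subgroup schemes of the twisted group $\Aut^G(X)$ to commutative subgroup schemes of $G$ itself, together with the uniqueness \emph{up to conjugacy} of the minimal $H$. The latter requires knowing that the intersection of two anti-affine subgroups of $\cT^0$ both surjecting onto $A$ still surjects onto $A$ and remains anti-affine; this is a structural statement about anti-affine groups over an abelian variety which, while true, is not entirely formal, and it is where the bulk of the technical work will lie.
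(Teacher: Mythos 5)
Your framework for (a)$\Leftrightarrow$(b), (d), (e) is the same as the paper's (the largest anti-affine subgroup $\cG$ of the group scheme of equivariant lifts of translations), but several key steps are asserted with justifications that do not work, and the hardest implication is missing. First, surjectivity of $\pi_*\colon \cG \to A$ does not follow from ``$A$ is anti-affine and anti-affineness passes to quotients''; the needed argument is that the quotient of (the reduced neutral component of) $\pi_*^{-1}(A)$ by its anti-affine part is \emph{affine}, while $A/\pi_*(\cG)$ is complete, hence trivial. Second, the kernel of $\pi_*^{-1}(A)\to A$ is the group of bundle automorphisms $\Aut^G_A(X)\cong\Hom^G(X,G)$, not a form of $G$, and your proposal to identify $H=\Ker(\cG\to A)$ with a subgroup of $G$ ``via a local trivialisation'' does not make sense: elements of $H$ are $G$-equivariant maps $X\to G$, not constants. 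The paper instead uses that $G\times\cG$ acts transitively on $X$ and takes $H$ to be the isotropy subgroup scheme of a point; triviality of the kernels of both projections (using that $\cG$ is central and acts faithfully) is what simultaneously embeds $H$ in $G$ and produces the reduction $X\cong G\times^H\cG$. Third, your minimality argument aims at the wrong target: the claim in (b) quantifies over \emph{all} commutative subgroup schemes $K\subset G$ to which $\pi$ reduces, not over anti-affine subgroups of the automorphism group scheme. The obstacle you single out is in fact vacuous when $G$ is affine: there is a unique anti-affine subgroup surjecting onto $A$, since $\cG$ is at once the largest anti-affine subgroup and the smallest subgroup scheme mapping onto $A$ (and for non-affine $G$ the intersection statement you call true actually fails, e.g.\ two graphs $A\to G_{\ant}$ meet in a finite group). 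The genuine ingredient is observability: a commutative subgroup scheme $K$ of an affine group is observable, so $G/K$ is quasi-affine (with a separate lemma handling non-reduced $K$), hence the reduction map $X\to G/K$ is $\cG$-invariant, so $H$ fixes a point of $G/K$ and is conjugate into $K$.

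The largest gap is (c)$\Rightarrow$(a), which your plan does not address: functoriality of homogeneity under change of structure group only yields (a)$\Rightarrow$(c), and the phrase ``the structural subgroup for $X/R_u(G)\to X/G_{\aff}$ lifts compatibly to the structural subgroup for $X\to A$'' is circular in this direction, since producing such a subgroup for $X\to A$ is exactly what has to be proved. In the paper this direction needs: that $X/G_{\aff}$ is an abelian variety (via Nori's result on torsors over abelian varieties), reductions to $X$ irreducible and $G$ connected (the latter by an averaging trick over $G/G_0$ combined with divisibility of $A$), and, crucially, that dividing by the unipotent radical \emph{reflects} homogeneity. That last step rests on the criterion that a bundle under an affine $G$ is homogeneous as soon as the vector bundle associated to one faithful representation $\rho$ with $\rho(G)$ observable in $\GL(V)$ is homogeneous (existence of such $\rho$, plus descent of homogeneity along $G\subset\GL(V)$ with quasi-affine quotient), together with Mukai's theorem that extensions of homogeneous vector bundles are homogeneous, so that unipotent vector bundles are homogeneous. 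None of these inputs appear in your sketch. Finally, in (d)$\Rightarrow$(a) the passage from a splitting at the level of vector fields to surjectivity of $\pi_*$ on neutral components uses characteristic-$0$ smoothness of the automorphism group scheme (Cartier); this is worth making explicit, as it is precisely what breaks in positive characteristic.
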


When $G$ is the general linear group $\GL_n$, the $G$-bundles correspond to 
the vector bundles of rank $n$; it is easy to see that this correspondence preserves 
homogeneity. In this setting, Theorem \ref{thm:main} generalizes the above results of 
Miyanishi and Mukai. Indeed, (b) implies that any homogeneous vector bundle $E$ of rank 
$n$ is associated with a representation $\rho : H \to \GL_n$, where $H$ is a commutative 
group scheme. Then the image of $H$ is contained (up to conjugacy) in the subgroup of 
upper triangular matrices; thus, $E$ is an iterated extension of line bundles. 
The latter are homogeneous by (c) and hence algebraically trivial, which yields (ii). 
Likewise, (iii) follows from the decomposition of $\rho$ into a direct sum of weight spaces 
for the diagonalizable part of $H$. Conversely, if a vector bundle $E$ satisfies (ii) 
or (iii), then it is homogeneous by (c).

For an arbitrary algebraic group $G$, Theorem \ref{thm:main} reduces somehow the 
study of principal $G$-bundles to the case that $G$ is connected and reductive. 
One may obtain a further reduction to irreducible bundles in a suitable sense, 
and explicitly describe these bundles; this is developed in \cite{Br11a} 
for the projective linear group (equivalently, for projective bundles), and in 
\cite{Br11b} for the projective orthogonal and symplectic groups.

The proof of Theorem \ref{thm:main} is based on an analysis of the group scheme 
of equivariant automorphisms, $\Aut^G(X)$. 
This approach, already followed by Miyanishi in \cite{Mi73}, is combined 
here with recent developments on the structure of (possibly non-affine) algebraic groups, 
see \cite{Br09, SS09, Br10a, Br10b}. Namely, the group scheme $\Aut^G(X)$ is locally 
of finite type, and hence contains a largest subgroup scheme $\cG$ which is anti-affine 
(i.e., every global regular function on $\cG$ is constant). Then $\cG$ is a connected 
commutative algebraic group that acts on $A$ by translations via the natural homomorphism 
$\pi_*: \Aut^G(X) \to \Aut(A)$. One easily shows that $\pi$ is homogeneous if and only if 
$\pi_*$ maps $\cG$ onto $A$ (Proposition \ref{prop:aa}); equivalently, 
$G \times \cG$ acts transitively on $X$. From this, (a)$\Rightarrow$(c) 
and (a)$\Leftrightarrow$(b) follow without much difficulty, where $H$ is the kernel 
of $\pi_{*\vert \cG}$. To prove (c)$\Rightarrow$(a), we use a characterization 
of homogeneity in terms of associated vector bundles which is of independent interest
(Theorem \ref{thm:ass}) together with Mukai's result that (iii)$\Rightarrow$(i).
The characterization in terms of connections also follows readily from Proposition 
\ref{prop:aa}; this is carried out in Proposition \ref{prop:conn}. 

Actually, our approach yields a classification of homogeneous $G$-bundles over $A$
in terms of pairs $(H,\cG)$, where $H \subset G$ is a commutative subgroup scheme,
and $\cG$ an anti-affine extension of $A$ by $H$ (Theorem \ref{thm:str}). 
Such extensions have been classified in \cite{Br09, SS09} when $H$ is a connected affine
algebraic group; the case that $H$ is an (affine) group scheme is obtained in Theorem 
\ref{thm:cla} along similar lines. But classifying all commutative subgroup schemes 
of an algebraic group $G$ up to conjugacy is a difficult open problem, 
already when $G$ is the general linear group.

\bigskip

\noindent
{\bf Acknowledgements.}
This work was completed during a staying at the Institute of Mathematical Sciences,
Chennai, in January 2011. I thank that institute and the Chennai Mathematical 
Institute for their hospitality, and their members for fruitful discussions. 
In particular, I thank V.~Balaji for having informed me of the unpublished preprint
``Translation-Invariant and Semi-stable Sheaves on Abelian Varieties'' after I gave 
him an almost final version of the present paper. That preprint was written by 
V.B. Mehta and M.V. Nori around 1984: it contains a proof of the above results of 
Matsushima, Miyanishi, Morimoto and Mukai that goes along similar lines as the one 
presented here. Specifically, Mehta and Nori obtain statements equivalent 
to Proposition \ref{prop:aa}, Corollary \ref{cor:uni} and Proposition \ref{prop:con};
they also sketch a classification of anti-affine extensions, similar to Theorem 
\ref{thm:cla}.

\bigskip 

\noindent
{\bf Notation and conventions.}
We will consider algebraic varieties and schemes over a fixed algebraically closed field 
$k$ of arbitrary characteristic. Unless explicitly mentioned, schemes under consideration 
are assumed to be of finite type over $k$; by a point of 
such a scheme $X$, we mean a closed point. A \emph{variety} is a reduced and separated 
scheme; in particular, varieties need not be irreducible. 

We will use the book \cite{DG70} as a general reference for \emph{group schemes}. 
Given a group scheme $G$, we denote by $e_G \in G(k)$ the neutral element
and by $G_0$ the \emph{neutral component}, i.e, the connected component of $G$
that contains $e_G$. Then $G_0$ is a connected normal subgroup scheme of $G$, 
and the quotient $G/G_0$ is finite (under our assumption that $G$ is of finite type). 
A homomorphism of group schemes $f: G \to H$ is \emph{faithful} if its (scheme-theoretic)
kernel is trivial. 

Throughout this article, $G$ denotes an \emph{algebraic group}, that is, a smooth group 
scheme, and $A$ an \emph{abelian variety}, i.e., a connected complete algebraic group. 
Then $A$ is commutative, hence its group law will be denoted additively, and the neutral 
element by $0_A$. For any point $a \in A$, we denote by $\tau_a : A \to A$ the translation
$x \mapsto x + a$. Our general reference for abelian varieties is the book 
\cite{Mum70};  for affine or equivalently, linear algebraic groups, we refer to \cite{Sp98}.

\section{Characterizations}
\label{sec:ch}

\subsection{Reduction to the case of a connected affine structure group}
\label{subsec:red}

A variety $X$ equipped with a $G$-action $G \times X \to X$, $(g,x) \mapsto g \cdot x$ 
and with a $G$-invariant morphism
\begin{equation}\label{eqn:prin}
\pi : X \longrightarrow A
\end{equation}
is a \emph{(principal) $G$-bundle}, also known as a $G$-\emph{torsor}, 
if $\pi$ is faithfully flat and the morphism 
$G \times X \to X \times_A X$, $(g,x) \mapsto  (x, g \cdot x)$ is an isomorphism.
Then the morphism $\pi$ is smooth, and hence so is $X$. Moreover, $\pi$ is locally isotrivial
and quasi-projective (see e.g. \cite[Corollary 3.5 and Remark 3.6 (iii)]{Br10b}). Therefore, 
the variety $X$ is quasi-projective as well. If $G$ is connected, then $X$ is irreducible.

\begin{definition}\label{def:prin}
The $G$-bundle (\ref{eqn:prin}) is called \emph{homogeneous}, if the translation 
$\tau_a : A \to A$ lifts to a $G$-automorphism $\widetilde{\tau_a}: X \to X$
for any point $a \in A$.
\end{definition}

Denote by $\pi_a : \tau_a^*(X) \to A$ the pull-back of the $G$-bundle (\ref{eqn:prin}) 
under $\tau_a$, so that we have a cartesian square
$$
\CD
\tau_a^*(X) @>{\psi_a}>> X \\
@V{\pi_a}VV @V{\pi}VV \\
A  @>{\tau_a}>> A. \\
\endCD
$$
Then the $G$-equivariant lifts $\widetilde{\tau_a}$ of $\tau_a$ correspond bijectively to 
the isomorphisms of $G$-bundles $\varphi_a : X \to  \tau_a^*(X)$. Thus, 
\emph{a $G$-bundle is homogeneous if and only if it is isomorphic to its pull-back under 
any translation in $A$}.

We now reformulate the notion of homogeneity in terms of the \emph{equivariant automorphism group} 
$\Aut^G(X)$. By \cite[Theorem 4.2]{Br10b}, the group functor $S \mapsto \Aut^G_S(X \times S)$ 
is represented by a group scheme, locally of finite type, that we still denote by $\Aut^G(X)$ 
for simplicity. Moreover, we have an exact sequence
\begin{equation}\label{eqn:sch}
\CD
1 @>>> \Aut^G_A(X) @>>> \Aut^G(X) @>{\pi_*}>> \Aut(A)
\endCD
\end{equation}
where $\Aut^G_A(X)$ denotes the group scheme of \emph{bundle automorphisms},
$S \mapsto \Aut^G_{A \times S}(X\times S)$, and $\pi_*$ arises from the homomorphisms 
$\Aut^G_S(X \times S) \to \Aut_S(A \times S)$. Also, by a standard result 
(see e.g. \cite[Lemma 4.1]{Br10b}), the map 
$$
\Hom^G(X,G) \longrightarrow \Aut^G_A(X), \quad 
f \longmapsto (x \mapsto f(x) \cdot x)
$$
is an isomorphism, where $\Hom^G(X,G)$ denotes the group functor
$S \mapsto \Hom^G(X \times S, G)$ (the group of $G$-equivariant morphisms 
$X \times S \to G$ for the given $G$-action on $X$, the trivial action on $S$ and the 
conjugation action on $G$). 

If $G$ is affine, then so is $\Aut^G_A(X)$; if $G$ is the general linear group, then 
$\Aut^G_A(X)$ is a connected algebraic group. But for an arbitrary $G$, the group scheme
$\Aut^G_A(X)$ need not be connected nor reduced (see Examples \ref{ex:eagst}). 

As in \cite{Mi73}, we may view $\Aut^G(X)$ as the scheme of pairs 
$(\varphi,\psi)$, where $\varphi \in \Aut(A)$ and $\psi : X \to \varphi^*(X)$ is an isomorphism 
of $G$-bundles; then $\pi_*$ is just the projection $(\varphi,\psi) \mapsto \varphi$. 

The group scheme $\Aut(A)$ is the semi-direct product of the normal subgroup
$A$ of translations with the subgroup scheme $\Aut_{\gp}(A)$ of automorphisms of the algebraic
group $A$. Moreover, $A = \Aut_0(A)$, and $\Aut_{\gp}(A)$ is the constant group scheme associated
with a subgroup of $\GL_N(\bZ)$ for some integer $N \geq 1$. Clearly, 
\emph{(\ref{eqn:prin}) is homogeneous if and only if the image of $\pi_*$ contains $A$}. 

Next, recall that the group scheme $\Aut^G(X)$ admits a largest subgroup scheme $\cG$ 
which is \emph{anti-affine}, i.e., $\cO(\cG) = k$. Moreover, $\cG$ is a normal connected 
commutative algebraic subgroup, and the quotient $\Aut^G(X)/\cG$ is the \emph{affinization} 
of $\Aut^G(X)$ (the largest affine quotient group scheme), see \cite[III, \S 3, no. 8]{DG70}. 
We set 
$$
\cG = \Aut^G(X)_{\ant}.
$$
Note that $\cG$ is mapped to a subgroup of $A$ under $\pi_* : \Aut^G(X) \to \Aut(A)$.
 
Also, recall that $X$ admits a largest anti-affine group of automorphisms, that we denote 
by $\Aut(X)_{\ant}$; moreover, this subgroup centralizes any connected algebraic group 
of automorphisms (see \cite[Lemma 3.1]{Br10a} when $X$ is irreducible; 
the case of an arbitary variety $X$ follows readily, since any irreducible component of $X$
is stable under a connected group of automorphisms). In particular, $\cG \subset \Aut(X)_{\ant}$ 
with equality if $G$ is connected. 

We may now state our key observation:

\begin{proposition}\label{prop:aa}
With the above notation, the $G$-bundle (\ref{eqn:prin}) is homogeneous if and only 
if $\pi_*$ maps $\cG$ onto $A$. 

If in addition $G$ is affine, then $\cG$ is the smallest subgroup scheme of $\Aut^G(X)$ 
that is mapped onto $A$ via $\pi_*$. 
\end{proposition}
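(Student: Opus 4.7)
The backward direction of the equivalence is immediate from the earlier criterion that (\ref{eqn:prin}) is homogeneous iff the image of $\pi_*$ contains $A$. For the forward direction, my plan rests on the elementary dichotomy that a group scheme which is simultaneously affine and anti-affine is trivial, together with two reductions. First, since $\pi_*$ sends $\Aut^G(X)_0$ into $\Aut(A)_0 = A$ and $\Aut^G(X)/\Aut^G(X)_0$ is finite, the homogeneity hypothesis forces $\pi_*(\Aut^G(X)_0) = A$: the quotient $A/\pi_*(\Aut^G(X)_0)$ is simultaneously finite (as a quotient of the finite image of $\Aut^G(X)/\Aut^G(X)_0$) and connected (as a quotient of $A$ by a connected closed subgroup), hence trivial.

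The second and crucial reduction is the following general lemma: for any surjection $f:L \to A$ from a connected group scheme of finite type onto an abelian variety, the anti-affine part $L_{\ant}$ (kernel of the affinization $L \to L^{\aff}$) already surjects onto $A$. Indeed, $A/f(L_{\ant})$ is a quotient of the abelian variety $A$, hence complete and connected, and also a quotient of the affine group $L/L_{\ant}$, so it is trivial. Applied to $L = \Aut^G(X)_0$, for which $L_{\ant} = \cG$ (every anti-affine subgroup of $\Aut^G(X)$ is connected and thus lies in $\Aut^G(X)_0$), this proves $\pi_*(\cG) = A$ and completes the equivalence.

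For the minimality assertion (assuming $G$ affine), let $H \subset \Aut^G(X)$ be any subgroup scheme with $\pi_*(H) = A$. Applying the same two reductions to $H$ in place of $\Aut^G(X)$ produces an anti-affine subgroup $H_0^{\ant} \subset H$ surjecting onto $A$, and the maximality of $\cG$ yields $H_0^{\ant} \subset \cG$. It remains to prove the reverse inclusion. Let $K = \ker(\pi_*|_{\cG}) = \cG \cap \Aut^G_A(X)$; the assumption that $G$ is affine enters here, ensuring $\Aut^G_A(X)$ and hence $K$ is affine. Since $H_0^{\ant}$ also surjects onto $A$, one has $\cG = H_0^{\ant} \cdot K$, so $\cG/H_0^{\ant} \cong K/(K \cap H_0^{\ant})$ is affine. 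On the other hand, $\cG/H_0^{\ant}$ is a quotient of the anti-affine $\cG$ by a closed subgroup scheme, hence itself anti-affine (the pullback $\cO(\cG/H_0^{\ant}) \hookrightarrow \cO(\cG) = k$ is injective by faithful flatness). Therefore $\cG/H_0^{\ant}$ is trivial, so $\cG = H_0^{\ant} \subset H$.

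The main obstacle is this last step: the maximality of $\cG$ only gives $H_0^{\ant} \subset \cG$, and reversing this inclusion requires the affine-versus-anti-affine dichotomy combined with the affineness of $\Aut^G_A(X)$. This is also the only point where the hypothesis ``$G$ affine'' is used; the equivalence itself goes through for an arbitrary algebraic group $G$.
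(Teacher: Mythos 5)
Your overall strategy is the same as the paper's, and most of the details coincide: the backward direction, the dichotomy lemma (the quotient $A/\pi_*(\cG)$ is affine as a quotient of the affinization and complete and connected as a quotient of $A$, hence trivial), and the minimality argument (writing $\cG = \cK \cdot \cH_{\ant}$ with $\cK = \cG \cap \Aut^G_A(X)$ affine because $G$ is, so that $\cG/\cH_{\ant}$ is simultaneously affine and anti-affine, hence trivial) are exactly the paper's steps, including the correct identification of where the hypothesis that $G$ is affine enters. The one genuine flaw is your justification of the first reduction: you assert that $\Aut^G(X)/\Aut^G(X)_0$ is finite. This is false in general, because $\Aut^G(X)$ is only locally of finite type (the finiteness of component groups in the paper's conventions applies to group schemes of finite type, which $\Aut^G(X)$ is explicitly not assumed to be). Already for the trivial bundle, $\Aut^G(X) \cong \Aut(A) \ltimes \Hom(A,G)$ has component group containing the typically infinite discrete group $\Aut_{\gp}(A) \subset \GL_N(\bZ)$, and Remark \ref{rem:red} exhibits a further free abelian factor $\Hom_{\gp}(A,G_{\ant})$ of positive rank whenever $G_{\ant}$ is nontrivial. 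Consequently your ``finite, hence trivial'' argument for $A/\pi_*(\Aut^G(X)_0)$ does not go through as written: homogeneity only tells you that this abelian variety is covered by the images of the (countably many) components of $\Aut^G(X)$ lying over $A$, which forces triviality by a cardinality argument only when $k$ is uncountable.

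The paper does not make any such finiteness claim: it uses homogeneity to say that $\pi_*$ restricts to a surjection $\pi_*^{-1}(A) \to A$, passes to the reduced neutral component $H$ of $\pi_*^{-1}(A)$ (asserting that it still maps onto $A$), and only then applies the affine/anti-affine dichotomy to $H \supseteq \cG$. So the intermediate statement you need --- that the neutral component of $\Aut^G(X)$ (equivalently, of $\pi_*^{-1}(A)$) already surjects onto $A$ --- is indeed the same one the paper uses, but it must be established without appealing to finiteness of the full component group; as it stands, that step of your proof rests on a false assertion and needs to be replaced by an argument along the paper's lines. The remainder of your proposal, including the observation that anti-affine subgroups are connected so that $\cG = (\Aut^G(X)_0)_{\ant}$, and the whole minimality discussion (where the subgroup scheme $\cH$ is of finite type, so no such issue arises), is correct and agrees with the paper.
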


\begin{proof}
Assume that (\ref{eqn:prin}) is homogeneous. Then $\pi_*$ restricts to a surjective
morphism of group schemes $\pi_*^{-1}(A) \to A$. Let $H \subset \Aut^G(X)$ denote the reduced 
neutral component of $\pi_*^{-1}(A)$. Then $\pi_*$ still maps $H$ onto $A$, and $H$ contains 
$\cG$ as a normal subgroup such that $H/\cG$ is affine. It follows that $A/\pi_*(\cG)$, a quotient 
of $H/\cG$, is affine as well. But $A/\pi_*(\cG)$ is complete as a quotient of $A$. We conclude 
that $\pi_*(\cG) = A$. The converse is obvious.

Next, assume that $G$ is affine.
Let $\cH \subset \Aut^G(X)$ be a subgroup scheme such that $\pi_*(\cH) = A$ and consider the
largest anti-affine subgroup $\cH_{\ant} \subset \cH$. Then, as above, $A/\pi_*(\cH_{\ant})$ is 
a quotient of the affinization $\cH/\cH_{\ant}$ and hence $\pi_*(\cH_{\ant}) = A$. Since 
$\cH_{\ant} \subset \cG$, it follows that $\cG = \cK \cH_{\ant}$ where 
$\cK := \cG \cap \Aut^G_A(X)$. Then $\cK$ is affine; thus, $\cG/\cH_{\ant}$ is affine 
(as a quotient of $\cK$) and anti-affine (as a quotient of $\cG)$, and hence trivial.
We conclude that $\cG = \cH_{\ant} \subset \cH$.
\end{proof}

\begin{remark}\label{rem:red}
For an arbitrary $G$-bundle (\ref{eqn:prin}), the central subgroup $G_{\ant} \subset G$ 
acts on $X$ by bundle automorphisms. In fact, \emph{the induced map
$G_{\ant} \to \Aut^G_A(X)_{\ant}$ is an isomorphism}.

Indeed, for any $x \in X$, the evaluation homomorphism
$$
\Aut^G_A(X) \cong \Hom^G(X,G) \longrightarrow G, \quad u \longmapsto u(x)
$$
sends $\Hom^G(X,G)_{\ant}$ to $G_{\ant}$. Thus,
$$
\Aut^G_A(X)_{\ant} \cong \Hom^G(X,G)_{\ant} \subset \Hom^G(X,G_{\ant}) = \Hom(A,G_{\ant})
$$
where the latter equality holds since $G$ acts trivially on $G_{\ant}$. Moreover, for any
morphism $f: A \to G_{\ant}$, the map $a \mapsto f(a) f(0)^{-1}$ is a group homomorphism,
and hence 
$$
\Hom(A,G_{\ant}) \cong G_{\ant} \ltimes \Hom_{\gp}(A,G_{\ant}).
$$
Also, one easily shows that $\Hom_{\gp}(A,G_{\ant})$ is a free abelian group of finite rank. 
Thus, $\Hom(A,G_{\ant})_0 = G_{\ant}$ which yields our claim.

If the bundle (\ref{eqn:prin}) is trivial, then the exact sequence (\ref{eqn:sch}) splits.
Hence the group scheme $\Aut^G(X)$ is the semi-direct product of $\Aut(A)$ with the normal
subgroup scheme
$$
\Aut^G_A(X) \cong \Hom^G(G \times A, G) \cong \Hom(A, G).
$$
Moreover, as above, 
$$
\Hom(A,G)\cong G \ltimes \Hom_{\gp}(A, G) \cong G \ltimes \Hom_{\gp}(A, G_{\ant}).
$$ 
It follows that $\cG \cong G_{\ant} \times A$. 
\end{remark}

As a first application of Proposition \ref{prop:aa}, we reduce the study of homogeneous 
torsors to the case that $X$ is \emph{irreducible}. For an arbitrary $G$-torsor 
(\ref{eqn:prin}), note that $G$ acts transitively on the set of irreducible components of
$X$. Choose such a component $X_1$ and denote by $G_1 \subset G$ its stabilizer. 
Then $G_1$ is a closed subgroup containing $G_0$, and $X \cong G \times^{G_1} X_1$,
i.e., the map $G \times X_1 \to X$, $(g,x_1) \mapsto g \cdot x_1$ is a $G_1$-bundle 
for the action defined by
$g_1 \cdot (g,x_1) = (gg_1^{-1}, g_1 \cdot x_1)$; moreover, the restriction 
$\pi_1 := \pi_{\vert X_1} : X_1 \to A$ is a $G_1$-bundle as well.

\begin{proposition}\label{prop:conn}
With the above notation, $\pi: X \to A$ is homogeneous if and only if so is
$\pi_1: X_1 \to A$. 
\end{proposition}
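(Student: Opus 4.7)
The plan is to apply Proposition \ref{prop:aa} to both $\pi$ and $\pi_1$: writing $\cG = \Aut^G(X)_{\ant}$ and $\cG_1 = \Aut^{G_1}(X_1)_{\ant}$, it will suffice to show that $\pi_*(\cG) = \pi_{1*}(\cG_1)$ as subgroups of $A$, since then $\pi_*(\cG) = A$ iff $\pi_{1*}(\cG_1) = A$.

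First, I would observe that since $X$ is smooth, its irreducible components coincide with its connected components, so $X_1$ is open and closed in $X$. Any connected group scheme of automorphisms of $X$ must therefore preserve $X_1$; in particular $\cG$, being connected, is contained in the subgroup scheme $\Aut^G(X,X_1) \subset \Aut^G(X)$ of those $G$-automorphisms that stabilize $X_1$. Next, I would build a natural isomorphism of group schemes
\[
r : \Aut^G(X,X_1) \xrightarrow{\sim} \Aut^{G_1}(X_1)
\]
by restriction to $X_1$. The inverse sends $\varphi_1 \in \Aut^{G_1}(X_1)$ to the unique $G$-equivariant automorphism $\varphi$ of $X \cong G \times^{G_1} X_1$ satisfying $\varphi(g \cdot x_1) = g \cdot \varphi_1(x_1)$; well-definedness is forced by the $G_1$-equivariance of $\varphi_1$, and functoriality in the parameter scheme upgrades $r$ to an isomorphism of group schemes. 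It is then immediate that $\pi_{1*} \circ r$ coincides with the restriction of $\pi_*$ to $\Aut^G(X,X_1)$.

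The crux is to identify $r(\cG)$ with $\cG_1$. Since $r$ is a homomorphism, $r(\cG)$ is anti-affine, so $r(\cG) \subset \cG_1$. Conversely, $r^{-1}(\cG_1)$ is an anti-affine subgroup scheme of $\Aut^G(X,X_1) \subset \Aut^G(X)$, hence contained in $\cG$; therefore $\cG_1 \subset r(\cG)$ as well. Combined with the compatibility above, this gives $\pi_*(\cG) = \pi_{1*}(\cG_1)$, and Proposition \ref{prop:aa} then yields the proposition.

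The main technical point I expect to have to justify is that $r$ is an isomorphism of group schemes, not merely of abstract groups on $k$-points; but this should follow from the universal property of $G \times^{G_1} X_1$ as a quotient (that $G$-morphisms out of it correspond to $G_1$-morphisms out of $X_1$), applied functorially to $X \times S$ over an arbitrary parameter scheme $S$.
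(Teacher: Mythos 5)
Your proof is correct and takes essentially the same route as the paper: one direction via restricting the connected group $\cG$ to the component $X_1$ (giving an anti-affine subgroup of $\Aut^{G_1}(X_1)$ mapping onto $A$), and the other via the extension homomorphism --- your $r^{-1}$, which is exactly the paper's $\Phi$ from Proposition \ref{prop:ext} --- compatible with the maps to $\Aut(A)$, both combined with Proposition \ref{prop:aa}. Your identification $r(\cG)=\cG_1$ just packages these two containments a bit more symmetrically than the paper does.
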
 

\begin{proof}
The connected group $\cG$ stabilizes every component of $X$, and hence restricts to
an anti-affine subgroup of $\Aut^{G_1}(X_1)$. In view of Proposition \ref{prop:aa}, it
follows that $\pi_1$ is homogeneous if so is $\pi$. For the converse, note that there is
a natural homomorphism $\Phi : \Aut^{G_1}(X_1) \to \Aut^G(X)$ which is compatible with 
the homomorphisms to $\Aut(A)$ (see Proposition \ref{prop:ext} for the construction of 
$\Phi$ in a more general setting).
\end{proof}

Next, we assume that $X$ is irreducible, and reduce to $G$ being \emph{connected}. 
For this, we consider the factorization
\begin{equation}\label{eqn:conn}
\CD
X @>{\pi_0}>> Y = X/G_0 @>{\varphi}>> A
\endCD
\end{equation}
of the $G$-bundle (\ref{eqn:prin}), where $G_0 \subset G$ stands for the neutral 
component, $\pi_0$ is a $G_0$-bundle, and $\varphi$
is a $G/G_0$-bundle. (The existence of (\ref{eqn:conn}) follows e.g. from 
\cite[Proposition 7.1]{MFK94}). Then $Y$ is an irreducible variety and $\varphi$
is a finite \'etale covering. By the Serre-Lang theorem (see e.g. \cite[IV.18]{Mum70}),
$Y$ is an abelian variety, once a base point $0_Y$ is chosen in the fiber of $\varphi$
at $0_A$; moreover, $\varphi$ is a separable isogeny.

\begin{theorem}\label{thm:conn}
With the above notation and assumptions, $\pi$ is homogeneous if and only if so is
$\pi_0$.
\end{theorem}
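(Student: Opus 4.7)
The plan is to invoke Proposition~\ref{prop:aa} for both $\pi$ and $\pi_0$, so that each homogeneity statement becomes the surjectivity of the natural map from the anti-affine part of an equivariant automorphism group scheme onto the target abelian variety. Write $\cG = \Aut^G(X)_{\ant}$ and $\cH = \Aut^{G_0}(X)_{\ant}$; both are normal connected commutative subgroup schemes. Since $G_0$ is connected, $\cH$ coincides with $\Aut(X)_{\ant}$ and hence centralises every connected algebraic group of automorphisms of $X$. As $G_0$ is normal in $G$, the scheme $\Aut^{G_0}(X)$ is stable under $G$-conjugation, and so is its largest anti-affine subgroup $\cH$; the centralizer property makes this conjugation action trivial on $G_0$, so it factors through the finite group $G/G_0$. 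By Serre--Lang, $\varphi : Y \to A$ is a separable isogeny of abelian varieties, and the resulting $G/G_0$-action on $Y$ is by translations by $F := \ker(\varphi)$.

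For the direction ``$\pi$ homogeneous $\Rightarrow$ $\pi_0$ homogeneous'', I would note that $G$-equivariance implies $G_0$-equivariance, so $\Aut^G(X) \subseteq \Aut^{G_0}(X)$ and $\cG \subseteq \cH$. The restriction of $\pi_*$ to $\cG$ factors as $\cG \to Y \to A$: the first arrow is descent along $\pi_0$, whose image is connected and thus lands in $\Aut(Y)_0 = Y$; the second is $\varphi$. By Proposition~\ref{prop:aa} the composition surjects onto $A$, so the image of $\cG$ in $Y$ is a closed connected subgroup mapping onto $A$ through the finite map $\varphi$, and a dimension count forces it to equal all of $Y$. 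A fortiori $(\pi_0)_*(\cH) = Y$, and Proposition~\ref{prop:aa} delivers the homogeneity of $\pi_0$.

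For the converse, assume $(\pi_0)_*(\cH) = Y$ and introduce the norm homomorphism
\[
N : \cH \longrightarrow \cH, \qquad u \longmapsto \prod_{\bar g \in G/G_0} \bar g \cdot u,
\]
which is well defined because $\cH$ is commutative and $G$-stable with $G_0$ acting trivially. Its image consists of $G/G_0$-invariants, hence of $G$-equivariant automorphisms, and is anti-affine as a homomorphic image of $\cH$; therefore $N(\cH) \subseteq \cG$. To compute $(\pi_0)_* \circ N$, I would use that on $Y$ both $(\pi_0)_*(u)$ and the $G/G_0$-action are given by translations, which commute; thus $(\pi_0)_*(\bar g \cdot u) = (\pi_0)_*(u)$ and $(\pi_0)_*(N(u)) = n \cdot (\pi_0)_*(u)$ with $n = |G/G_0|$. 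Since multiplication by $n$ is surjective on the abelian variety $Y$ and $(\pi_0)_*(\cH) = Y$, we get $(\pi_0)_*(N(\cH)) = Y$, hence $\pi_*(N(\cH)) = \varphi(Y) = A$, so $\pi_*(\cG) = A$ and $\pi$ is homogeneous by Proposition~\ref{prop:aa}.

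The main technical nuisance is the bookkeeping in the setup paragraph: verifying that $\cH$ is $G$-stable with $G_0$ acting trivially (so that $N$ is well-defined and its image really lies in $\Aut^G(X)$) and identifying the $G/G_0$-action on $Y$ with translations (so that conjugation has no effect after descent). Both points hinge on the centralizer property of $\Aut(X)_{\ant}$ recalled in the Introduction and on the Serre--Lang identification of $\varphi$ as an isogeny.
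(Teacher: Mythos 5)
Your proof is correct and follows essentially the same route as the paper: the forward direction is the paper's argument (connectedness of $\cG$ plus Proposition \ref{prop:aa} and the finiteness of $\varphi$), and your norm homomorphism $N$ is exactly the paper's product $\prod_{\gamma \in F} \gamma \cdot u$ over the conjugates, combined with surjectivity of multiplication by $|F|$ on the abelian variety. The only cosmetic difference is that you verify $F$-invariance at the level of $Y$ (using the Serre--Lang identification of the $F$-action as translations by $\Ker(\varphi)$) and then feed $N(\cH) \subset \cG$ into Proposition \ref{prop:aa}, whereas the paper checks invariance directly on $A$, where $F$ acts trivially, and concludes from the definition of homogeneity; both variants are sound.
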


\begin{proof}
If $\pi$ is homogeneous, then $\pi_*(\cG) = A$ by Proposition \ref{prop:aa}. Moreover,
$\cG \subset \Aut^{G_0}(X)$ acts on $Y$ via $(\pi_0)_*$. As $\cG$ is a connected 
algebraic group, it follows that $\cG$ is mapped onto $Y$ by $(\pi_0)_*$, i.e., 
$\pi_0$ is homogeneous.

For the converse, consider the group
$$
\cG_o := \Aut^{G_0}(X)_{\ant}
$$
on which $G$ acts by conjugation via its finite quotient group $F := G/G_0$. By assumption,
$(\pi_0)_*$ maps $\cG_o$ onto $Y$, and hence $\pi_*$ maps $\cG_o$ onto $A$; moreover,
$\pi_*$ is invariant under  the action of $F$. Given $a \in A$, let $u \in \cG_o$ 
such that $\pi_*(u) = a$; then $\pi_*(\prod_{\gamma \in F} \gamma \cdot u) = N a$
where $N$ denotes the order of $F$. Thus, $Na$ admits a lift in the fixed point 
subgroup $\cG_o^F$. But $\cG_o^F \subset \Aut^G(X)$ and the multiplication 
$N : A \to A$ is surjective; hence $\pi$ is homogeneous.
\end{proof} 

We now assume that $G$ is connected, and reduce to $G$ being \emph{affine}. 
By Chevalley's structure theorem (see \cite{Co02} for a modern proof), 
$G$ admits a largest connected affine subgroup $G_{\aff}$; 
moreover, $G_{\aff}$ is a normal subgroup of $G$ and $G/G_{\aff}$ is an
abelian variety. Note that $G = G_{\aff} G_{\ant}$, where $G_{\ant}$ denotes the 
largest anti-affine subgroup of $G$ (indeed, $G_{\aff} G_{\ant}$ is a closed normal subgroup
of $G$, and the quotient $G/G_{\aff} G_{\ant}$ is both affine and complete). Also,
recall from \cite[Corollary 3.5]{Br10b} that the $G$-bundle (\ref{eqn:prin}) has
a unique factorization
\begin{equation}\label{eqn:aff}
\CD
X @>{p}>> Y = X/G_{\aff} @>{q}>> A = X/G
\endCD
\end{equation}   
where $p$ is a $G_{\aff}$-bundle and $q$ is a $G/G_{\aff}$-bundle. Then 
\emph{$Y$ is an abelian variety and $q$ is a homomorphism of algebraic groups}, 
again once an origin $0_Y$ is chosen in the fiber of $q$ at $0_A$. 
Indeed, we have the following result, which is well-known in characteristic $0$ 
(see \cite[Corollaire 1]{Se58}):

\begin{proposition}\label{prop:ab}
Let $\pi : X \to A$ be a principal bundle under an abelian variety $G$. Then 
$X$ is an abelian variety as well. Moreover, $G$ acts on $X$ via an exact sequence
of abelian varieties
$$
0 \longrightarrow G \longrightarrow X \longrightarrow A \longrightarrow 0.
$$
\end{proposition}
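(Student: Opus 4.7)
The plan is to identify $X$ with its Albanese variety. First I would verify that $X$ is smooth, complete, and irreducible: smoothness and connectedness (hence irreducibility of the smooth variety) follow from the $G$-bundle structure together with the corresponding properties of $G$ and $A$, while completeness holds because $G$ is complete (so $\pi$ is proper, being \'etale-locally the projection $U \times G \to U$) and $A$ is complete. Now fix $x_0 \in \pi^{-1}(0_A)$ and consider the Albanese morphism $\alpha : X \to \Alb(X)$ with $\alpha(x_0) = 0$.

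Three applications of the universal property of the Albanese would yield: a surjective group homomorphism $\bar\pi : \Alb(X) \to A$ with $\bar\pi \circ \alpha = \pi$; a group homomorphism $f : G = \Alb(G) \to \Alb(X)$ arising from the closed immersion $\iota : G \hookrightarrow X$, $g \mapsto g \cdot x_0$, which factors through $K := \ker \bar\pi$ since $\pi \circ \iota = 0$; and a group homomorphism $\tilde\mu : G \times \Alb(X) \to \Alb(X)$ from the action $\mu : G \times X \to X$, using $\Alb(G \times X) = G \times \Alb(X)$. Since a group homomorphism out of $G \times \Alb(X)$ necessarily takes the form $(g,y) \mapsto f_1(g) + f_2(y)$, the identities $\mu(e,x) = x$ and $\mu(g,x_0) = g \cdot x_0$ force $f_2 = \id$ and $f_1 = f$. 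Hence $\alpha(g \cdot x) = f(g) + \alpha(x)$, so $\alpha$ is $G$-equivariant and becomes a morphism of bundles over $A$: from the $G$-torsor $\pi$ to the $K$-torsor $\bar\pi$, with $G$ acting on the target via $f$.

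The crucial step is to show that $f : G \to K$ is an isomorphism. Once granted, $\alpha$ is a morphism of $G$-torsors over $A$ (after identifying $G$ with $K$ via $f$), hence an isomorphism of varieties; then $X$ inherits the abelian-variety structure of $\Alb(X)$, and the exact sequence of the proposition is the kernel-cokernel sequence of $\bar\pi$. To pass from $X$ being an abelian variety to the explicit statement about the $G$-action, note that the action of $G$ on $X$ is then identified with translation by $f(G) = K$, so the original $G$-action is realized through the inclusion $K \hookrightarrow X$ as the kernel of $\bar\pi$.

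The main obstacle is proving that $f : G \to K$ is an isomorphism. Injectivity should follow from the freeness of the $G$-action together with the infinitesimal injectivity of $\alpha$ along $\Lie(G) \subset T_{x_0} X$: if $g \in \ker f$, then $\alpha$ is $g$-invariant, and combined with freeness this forces $g = e$. Surjectivity is equivalent to the dimension identity $\dim \Alb(X) = \dim X$, the bound $\leq$ being standard and the bound $\geq$ following from $\dim \Alb(X) \geq \dim A + \dim f(G)$ once $f$ is known injective. The delicate point is thus the injectivity of $d\alpha_{x_0}$ on $\Lie(G)$. In characteristic zero this is handled by the relative cotangent sequence $0 \to \pi^* \Omega^1_A \to \Omega^1_X \to \pi^* \Lie(G)^\vee \to 0$ together with the computation $H^0(X, \Omega^1_X) \cong \Lie(A)^\vee \oplus \Lie(G)^\vee$ using the projection formula and $\pi_* \cO_X = \cO_A$ (as $G$ is anti-affine). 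In arbitrary characteristic I would instead work with the anti-affine automorphism group $\cG = \Aut^G(X)_{\ant}$, which contains $G = G_{\ant}$ (since $G$ is commutative, it acts on $X$ by $G$-equivariant automorphisms, giving $G \hookrightarrow \Aut^G_A(X)_{\ant} \subset \cG$), and argue that $\cG$ acts transitively on $X$ with trivial stabilizer, thereby identifying $X$ with $\cG$ as an abelian variety fitting into the desired extension.
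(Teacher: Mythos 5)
Your route through the Albanese variety is genuinely different from the paper's proof, which reduces the structure group to a finite subgroup scheme $H \subset G$ (local isotriviality, \cite[Corollary 3.5]{Br10b}) and then applies Nori's description of finite torsors over an abelian variety \cite{No83} to write $X \cong G \times^K B \cong (G \times B)/K$ with $0 \to K \to B \to A \to 0$ an extension of abelian varieties. But your proposal has genuine gaps exactly at the step you yourself single out as crucial, namely that $f \colon G \to K$ is an isomorphism, and none of the sub-arguments offered closes it. First, the inequality $\dim \Alb(X) \le \dim X$ is not ``standard'': it fails already for a smooth projective curve of genus $g \ge 2$, whose Albanese is its $g$-dimensional Jacobian, so the surjectivity of $f$ onto $K$ requires a real argument specific to this situation. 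Second, ``$\alpha$ is invariant under $g \in \Ker(f)$, which together with freeness forces $g = e$'' is a non sequitur: at that stage nothing prevents $\alpha$ from having positive-dimensional fibres, and freeness of the $G$-action on $X$ yields no contradiction with such invariance.

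Third, the characteristic-zero computation $H^0(X,\Omega^1_X) \cong \Lie(A)^\vee \oplus \Lie(G)^\vee$ does not follow from the projection formula and $\pi_*\cO_X = \cO_A$ alone: taking global sections in the cotangent sequence, one must know that $H^0(X,\Omega^1_X) \to \Lie(G)^\vee$ is surjective, i.e. that the connecting map into $H^1(A,\Omega^1_A)$ vanishes; this is essentially the existence of a connection on $\pi$, which by Proposition \ref{prop:con} is equivalent to homogeneity --- precisely the property that, in the paper's logic, is a consequence of Proposition \ref{prop:ab} rather than an available hypothesis. Fourth, and most seriously, the arbitrary-characteristic fallback is circular: ``$\cG$ acts transitively on $X$'' means $\pi_*(\cG) = A$, which by Proposition \ref{prop:aa} is exactly the homogeneity of $\pi$; unconditionally one only knows that $\pi_*(\cG)$ is some abelian subvariety of $A$, and you give no argument that it is all of $A$. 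So the heart of the proposition is left unproved. Note finally that even after establishing that $f$ is a closed immersion one would still have to exclude a finite discrepancy between $f(G)$ and $K$, which again calls for a Serre--Lang/Nori-type input --- the very ingredient the paper's short proof uses directly.
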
 

\begin{proof}
Recall that $X \cong G \times^H Y$ for some finite subgroup scheme $H \subset G$ 
and some closed $H$-stable subscheme $Y \subset X$; moreover, 
the restriction $\pi_{\vert Y} : Y \to A$ is an $H$-bundle (see e.g. 
\cite[Corollary 3.5]{Br10b}). By \cite[Remark 2]{No83}), we 
have $Y \cong H \times^K B$ for some closed subgroup scheme $K \subset H$ 
and some abelian variety $B$ fitting into an exact sequence
$$
0 \longrightarrow K \longrightarrow B \longrightarrow A \longrightarrow 0.
$$
Thus,
$$
X \cong G \times^H (H \times^K B) \cong G \times^K B
$$
and $\pi$ is identified with the morphism $p : G \times^K B \to B/K = A$. Moreover,
$G \times^K B = (G \times B)/K$ is an abelian variety, and $p$ is a homomorphism
with kernel $G \times^K K \cong G$.  
\end{proof}

\begin{theorem}\label{thm:aff}
The following assertions are equivalent for a principal bundle $\pi: X \to A$
under a connected algebraic group $G$:

\smallskip

\noindent
{\rm (i)} $\pi$ is homogeneous.

\smallskip

\noindent
{\rm (ii)} The associated $G_{\aff}$-bundle $p : X \to Y$ is homogeneous
(where $Y$ is an abelian variety by Proposition \ref{prop:ab}).
\end{theorem}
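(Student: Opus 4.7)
The plan is to apply Proposition \ref{prop:aa} to both $\pi$ and $p$, using the fact that, since $G$ and $G_{\aff}$ are both connected, the intrinsic anti-affine automorphism group $\Aut(X)_{\ant}$ coincides with both $\Aut^G(X)_{\ant}$ and $\Aut^{G_{\aff}}(X)_{\ant}$ (both inclusions are forced by the centralizing property of $\Aut(X)_{\ant}$ recalled in the excerpt). Denote this common group by $\cG$. It acts on $X$ commuting with both $G$ and $G_{\aff}$, so descends to an action on $Y = X/G_{\aff}$; since $\cG$ is connected, its images in $\Aut(A)$ and $\Aut(Y)$ lie in the translation subgroups $\Aut_0(A) = A$ and $\Aut_0(Y) = Y$. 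We thus obtain group homomorphisms $p_* : \cG \to Y$ and $\pi_* : \cG \to A$ satisfying $q \circ p_* = \pi_*$.

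The extra ingredient I would exploit is the pair of facts $G_{\ant} \subset \cG$ and $p_*(G_{\ant}) = \ker(q)$. For the first, $G_{\ant}$ is central in $G$ (as used in Remark \ref{rem:red}), hence acts on $X$ by $G$-equivariant bundle automorphisms, and being anti-affine it sits inside $\cG$. For the second, the equality $G = G_{\aff} G_{\ant}$ forces the quotient map $G_{\ant} \to G/G_{\aff} = \ker(q)$ to be surjective, and the induced action of $G_{\ant}$ on $Y = X/G_{\aff}$ is just the translation action of $\ker(q) \subset Y$.

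Granted these ingredients, both implications fall out. If $\pi$ is homogeneous, then $\pi_*(\cG) = A$ by Proposition \ref{prop:aa}, so $p_*(\cG) \subset Y$ surjects onto $A = Y/\ker(q)$, whence $p_*(\cG) + \ker(q) = Y$; combined with $\ker(q) = p_*(G_{\ant}) \subset p_*(\cG)$, this forces $p_*(\cG) = Y$, and (ii) follows from Proposition \ref{prop:aa} applied to $p$. Conversely, if $p_*(\cG) = Y$ then $\pi_*(\cG) = q(Y) = A$, so $\pi$ is homogeneous. The delicate point in this plan is really the identification $\Aut^G(X)_{\ant} = \Aut^{G_{\aff}}(X)_{\ant}$ together with the verification that $G_{\ant}$ accounts for the factor $\ker(q)$; once that is in hand, everything reduces to a formal manipulation of the homomorphisms $p_*$, $q$, $\pi_*$ between abelian varieties.
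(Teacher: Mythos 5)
Your proposal is correct and follows essentially the same route as the paper: both identify the anti-affine group $\cG$ inside $\Aut^{G_{\aff}}(X)$, use Proposition \ref{prop:aa} for $\pi$, and use that $G_{\ant}\subset\cG$ surjects onto $G/G_{\aff}=\Ker(q)$ to conclude $p_*(\cG)=Y$ (the paper phrases this as transitivity of $\cG$ on $Y$). Your treatment of the converse via $q\circ p_*=\pi_*$ is just a slightly more explicit version of the paper's one-line remark that translations of $A$ lift to translations of $Y$.
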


\begin{proof}
The group $\cG \subset \Aut^{G_{\aff}}(X)$ acts on $Y$ via
$p_*: \Aut^{G_{\aff}}(X) \to \Aut(Y)$. If $\pi$ is homogeneous, then this
action is transitive. Indeed, $\cG$ acts transitively on $A$ by Proposition
\ref{prop:aa}. Also, $\cG$ contains the largest anti-affine subgroup
$G_{\ant} \subset G$; moreover, $G_{\ant}$ acts trivially on $A$ and transitively 
on the fibers of the $G/G_{\aff}$-bundle $q : Y \to A$, since the natural 
homomorphism $G_{\ant} \to G/G_{\aff}$ is surjective. As $\cG$ is connected 
and $Y$ is an abelian variety, it follows that $p_*$ induces a surjective 
homomorphism $\cG \to Y$. Thus, $p$ is homogeneous if so is $\pi$.

The converse implication is obvious, since every translation of $A$ 
lifts to a translation of~$Y$.     
\end{proof}

\subsection{Change of structure group}
\label{subsec:csg}

Given a $G$-bundle $\pi: X \to S$ over an arbitrary base, recall that a 
\emph{reduction} to a subgroup scheme $H \subset G$ is a closed $H$-stable 
subscheme $Y \subset X$ such that the natural map $G \times^H Y \to X$
is an isomorphism. Then $Y$ is the fiber of the associated $G$-equivariant 
morphism $f : X \to G/H$ at the base point of $G/H$. In fact, the data of
$Y$ and of $f$ are equivalent, and $\pi$ restricts to an $H$-bundle
$Y \to S$ (see e.g. \cite[Lemma 3.2]{Br10b}). 

Also, given a homomorphism of algebraic groups $f : G \to G'$, we may consider the $G'$-bundle 
$X' := G' \times^G X \to S$ obtained by \emph{extension of structure group}. This bundle
might not always exist in this generality (i.e., $X'$ may not be a scheme), but it does
exist in our setting, and this construction preserves homogeneity:

\begin{proposition}\label{prop:ext}
Let $\pi: X \to A$ be a $G$-bundle, and $f : G \to G'$ a homomorphism of algebraic groups.

\smallskip

\noindent
{\rm (i)} The $G'$-bundle 
$$
\pi': X' := G' \times^G X \longrightarrow A
$$
exists, and there is a natural homomorphism
$$
\Phi: \Aut^G(X) \longrightarrow \Aut^{G'}(X'),
$$
compatible with $\pi_*: \Aut^G(X) \to \Aut(A)$ and with 
$\pi'_*: \Aut^{G'}(X') \to \Aut(A)$. In particular, $\pi'$ is homogeneous
if so is $\pi$.

\smallskip

\noindent
{\rm (ii)} Conversely, if $\pi'$ is homogeneous, then so is $\pi$ under the
additional assumptions that $f$ is faithful and $G'/f(G)$ is quasi-affine.
\end{proposition}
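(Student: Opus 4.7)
For part (i), I would construct the associated bundle $X' = G' \times^G X$ using local isotriviality of $\pi$: over a finite \'etale cover $U \to A$ trivialising $X$, one has $X \times_A U \cong G \times U$, so $X' \times_A U \cong G' \times U$, and descent via the cocycle transformed by $f$ produces a $G'$-bundle $X' \to A$. The map $\Phi$ is then defined on $S$-points by $\Phi(u) \colon [g', x] \mapsto [g', u(x)]$; this is well-defined because $u$ is $G$-equivariant, and $G'$-equivariant because $G'$ acts on $X'$ by left multiplication on the first factor. A direct computation shows $\Phi(u)$ lifts the same translation of $A$ as $u$, i.e.\ $\pi'_* \circ \Phi = \pi_*$, from which homogeneity of $\pi$ immediately forces homogeneity of $\pi'$.

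For part (ii), I would invoke Proposition \ref{prop:aa}: the hypothesis reads $\pi'_*(\cG') = A$ for $\cG' := \Aut^{G'}(X')_{\ant}$, and the goal is $\pi_*(\cG) = A$ for $\cG := \Aut^G(X)_{\ant}$. The plan is to realise $\cG'$ as a subgroup of $\Aut^G(X)$ through $\Phi$. First, $\Phi$ is a monomorphism (and hence a closed immersion, since it is a homomorphism between group schemes locally of finite type with trivial kernel): if $\Phi(u) = \id_{X'}$ then $[g', u(x)] = [g', x]$ in $G' \times^G X$ forces $u(x) = g \cdot x$ for some $g \in G$ with $f(g) = e_{G'}$, whence $u = \id_X$ by faithfulness of $f$. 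Moreover, the image of $\Phi$ coincides with the subgroup of $\Aut^{G'}(X')$ that preserves $X$ as a closed subscheme of $X'$; so it suffices to prove that every element of $\cG'$ preserves $X$.

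The reduction $X \subset X'$ corresponds to a section $s \colon A \to V$ of the bundle $V := X'/f(G) \to A$, whose fibre $G'/f(G)$ is quasi-affine by hypothesis. I would embed $V$ as an open subscheme of its relative affine hull $\bar V := \Spec_A (V \to A)_* \cO_V$, affine over $A$, and extend the $\cG'$-action from $V$ to $\bar V$ by functoriality of $\Spec_A$. Since $A$ is proper, the scheme of sections of $\bar V \to A$ is represented by the affine scheme $\Spec_k \Gamma\bigl(A, (V \to A)_* \cO_V\bigr)$, on which $\cG'$ acts via the twisted rule $u \cdot t := u \circ t \circ \tau_{-\pi'_*(u)}$. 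The orbit map $\cG' \to \mathrm{Sect}(\bar V/A)$, $u \mapsto u \cdot s$, is then a morphism from an anti-affine group to an affine scheme, hence constant with value $s$; unwinding, $u \circ s = s \circ \tau_{\pi'_*(u)}$, so $u(s(A)) = s(A)$, and $u(X) = X$ since $X$ is the preimage of $s(A)$ under $X' \to V$. Consequently $\cG'$ lifts through $\Phi$ to an anti-affine subgroup of $\Aut^G(X)$, necessarily contained in $\cG$, giving $\pi_*(\cG) \supset \pi'_*(\cG') = A$ and thus homogeneity of $\pi$ by Proposition \ref{prop:aa}. The main obstacle I anticipate is the section-space step: one must verify that $V \to A$ is quasi-affine as a morphism (so that $V \hookrightarrow \bar V$ is an open immersion), that the $\cG'$-action extends to $\bar V$, and that the functor of sections of $\bar V \to A$ is representable by the displayed affine scheme, relying on properness of $A$ via $H^0(A \times S, \cO_{A \times S}) = H^0(S, \cO_S)$ to compute global sections across base changes; everything else is formal.
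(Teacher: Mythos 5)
Part (i) of your proposal is essentially the paper's argument: the associated bundle is constructed from local (iso)triviality (the paper does this via the factorization through $X/G_0$ and a quotient by the finite group $G/G_0$, which settles the effectivity-of-descent point you pass over silently, since $G'$ need not be affine), and your formula $\Phi(u)\colon [g',x]\mapsto [g',u(x)]$ is exactly the descent of $\id\times u$ used in the paper; the compatibility with $\pi_*$, $\pi'_*$ and the homogeneity implication are then immediate.

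In part (ii) your overall idea — use anti-affineness of $\cG':=\Aut^{G'}(X')_{\ant}$ together with quasi-affineness of $G'/f(G)$ to show that $\cG'$ preserves the reduction $X\subset X'$ — is the right one, but the step you build it on is flawed. The claim that the scheme of sections of $\bar V\to A$ is represented by $\Spec_k\,\Gamma\bigl(A,(V\to A)_*\cO_V\bigr)$ is false: already for $\bar V$ the total space of a $2$-torsion algebraically trivial line bundle $L$ on an elliptic curve, the section functor is trivial (only the zero section, since $H^0(A,L)=H^1(A,L)=0$), while $\Gamma(A,\mathrm{Sym}(L^{\vee}))\cong k[s]$ gives $\bA^1$. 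Representing the section functor by an affine scheme is possible, but it requires a coherent generating submodule of the pushforward algebra (and hence a finitely generated affine hull of the quasi-affine morphism $V\to A$) plus Grothendieck's linear representability results — none of which you verify, and the finite generation of $(V\to A)_*\cO_V$ is not automatic. Moreover you would also need to justify the existence of $V=X'/f(G)$ as a scheme and the extension of the $\cG'$-action to $\bar V$ and to the section scheme. All of this machinery is unnecessary: the paper simply observes that the natural $G'$-equivariant morphism $p\colon X'=G'\times^G X\to G'/f(G)$, $[g',x]\mapsto g'f(G)$, is a morphism from the variety $X'$ to a quasi-affine variety, hence is invariant under $\cG'$ by Lemma \ref{lem:inv} (an anti-affine automorphism group acts trivially on $\cO(X')$); therefore $\cG'$ preserves the fibre $X=p^{-1}(e\,f(G))$, acts on it by $G$-automorphisms lifting all translations of $A$, and $\pi$ is homogeneous. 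If you replace your section-space step by this one-line invariance argument, the rest of your reasoning (restriction of $\cG'$ to $X$ landing in $\cG$, and Proposition \ref{prop:aa}) goes through; as written, however, the argument has a genuine gap at its central technical claim.
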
  

\begin{proof}
(i) Consider the neutral component $G_0 \subset G$ and the (finite) quotient
group $F := G/G_0$. Then the $G'$-bundle $G' \times^{G_0} X \to X/G_0$ exists
by \cite[Corollary 3.4]{Br10b}; moreover, $X/G_0$ is a finite cover of $A$, 
and hence is projective. Thus, the variety $G'\times^{G_0} X$ 
is quasi-projective (see \cite[Corollary 3.5]{Br10b}; it carries an action of $F$ 
arising from the diagonal action of $G$ on $G' \times X$. 
The quotient $(G' \times^{G_0}X)/F$ exists by \cite{Mum70}, and yields the
desired $G$-bundle $\pi'$. 

To construct $\Phi$, note that any $u \in \Aut^G(X)$ yields an automorphism
$$
\id \times u : G' \times X \longrightarrow G' \times X, \quad
(g',x) \longmapsto (g',u(x))
$$ 
which is $G' \times G$-equivariant, and hence descends to $u' \in \Aut^{G'}(X')$.
Moreover, the assignement $u \mapsto u' =: \Phi(u)$ readily extends to a 
homomorphism of group functors, and hence yields the desired homomorphism
of group schemes.

(ii) Let $\cG' := \Aut^{G'}(X')_{\ant}$. Then $\pi'_*: \cG' \to A$ is surjective 
by Proposition \ref{prop:aa}. Also, the natural morphism
$$
p: X' = G' \times^G X \to G'/f(G)
$$
is invariant under $\cG'$ by Lemma \ref{lem:inv}. Thus, $\cG'$ acts on $X$ 
(the fiber of $p$ at the base point of $G'/f(G)$), and this action lifts 
the $A$-action on itself by translations. Hence $\pi$ is homogeneous.
\end{proof}

\begin{lemma}\label{lem:inv}
Let $Z$ be a variety, $\cH$ its largest anti-affine group of automorphisms,
and $f : Z \to W$ a morphism to a quasi-affine variety. Then $f$ is $\cH$-invariant.
\end{lemma}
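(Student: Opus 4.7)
The plan is to show that for each point $z \in Z$, the orbit map $\varphi_z : \cH \to W$, $h \mapsto f(h \cdot z)$, is constant; evaluating at the neutral element will then give $f(h \cdot z) = f(z)$ for all $h$, which is exactly the $\cH$-invariance of $f$.

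The key step is the general fact that any morphism from an anti-affine scheme to a quasi-affine variety is constant. First I would reduce to the affine case: since $W$ is quasi-affine, it admits an open immersion $W \hookrightarrow \bar{W}$ into an affine variety, so it suffices to show that the composition $\cH \to W \to \bar{W}$ is constant. Once $\bar{W}$ is affine, this morphism is determined by the induced $k$-algebra homomorphism $\cO(\bar{W}) \to \cO(\cH)$. By definition of anti-affine, $\cO(\cH) = k$, so this homomorphism factors through $k$, meaning the morphism $\cH \to \bar{W}$ factors through $\Spec(k)$ and is therefore constant. Its image lies in the single point $f(z) \in W \subset \bar{W}$, and pulling back to $\cH \to W$ gives a constant morphism as well.

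Applying this to $\varphi_z$ for each $z$ yields $f(h \cdot z) = \varphi_z(h) = \varphi_z(e_\cH) = f(z)$, so $f$ is $\cH$-invariant as claimed.

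I do not expect a serious obstacle here. The only point worth being careful about is that $\cH$, as an anti-affine algebraic group, is automatically connected (and commutative), but connectedness is not strictly needed for the argument above; the vanishing $\cO(\cH) = k$ alone forces any morphism to an affine scheme to factor through a point. The reduction from quasi-affine to affine via an open immersion is harmless because a constant morphism into $\bar{W}$ with value in $W$ is already a morphism into $W$.
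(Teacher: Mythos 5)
Your argument is correct, but it takes a slightly different route than the paper. You fix a point $z \in Z$ and show that the orbit morphism $\varphi_z : \cH \to W$, $h \mapsto f(h\cdot z)$, is constant, using only the adjunction $\Hom(\cH,\bar W) \cong \Hom_{k\text{-alg}}(\cO(\bar W),\cO(\cH))$ for an affine $\bar W \supset W$ together with $\cO(\cH)=k$; this forces $\varphi_z$ to factor through $\Spec k$, hence to be constant with value $f(z)$. The paper instead works on the function-algebra side of $Z$: the $\cH$-action makes $\cO(Z)$ a rational $\cH$-module, any rational representation of an anti-affine group is trivial (it factors through the trivial affinization), so $\cH$ fixes the image of $f^{\#}:\cO(W)\to\cO(Z)$, and since regular functions separate points on the quasi-affine $W$, invariance follows. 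Your version is more elementary and pointwise — it avoids invoking the rational-module structure on the possibly infinite-dimensional $\cO(Z)$ and the triviality of rational representations of anti-affine groups — at the mild cost that constancy of each $\varphi_z$ only gives equality $f(h\cdot z)=f(z)$ on closed points, so to conclude invariance of $f$ as a morphism (i.e.\ $f\circ a = f\circ \mathrm{pr}_Z$ on $\cH\times Z$) you should add the one-line remark that $\cH\times Z$ is reduced and $W$ separated, so agreement on closed points suffices; the paper's argument gives the invariance of $f^{\#}(\cO(W))$ directly. Both proofs use the same two inputs (anti-affineness of $\cH$, quasi-affineness of $W$), just packaged on opposite sides of the morphism.
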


\begin{proof}
The action of $\cH$ on $Z$ yields a structure of rational $\cH$-module on the algebra
of regular functions $\cO(Z)$. Since $\cH$ is anti-affine, it must act trivially on
$\cO(Z)$ and hence on the image of the algebra homomorphism
$f^{\#} : \cO(W) \longrightarrow \cO(Z)$.
As the regular functions on $W$ separate points, this means that $f$ is $\cH$-invariant. 
\end{proof}

\begin{remarks}\label{rem:csg}
(i) With the notation of the above proposition, the homogeneity of $\pi'$ does not
always imply that of $\pi$. For instance, if $f$ is constant, then $\pi'$ is the trivial
bundle $G' \times A \to A$, and hence is homogeneous even if $\pi$ is not. For less
trivial instances, where $f$ is faithful, see Examples \ref{ex:ext} and \ref{ex:eagst} (ii).

\smallskip

\noindent
(ii) Recall that a closed subgroup $H$ of an affine algebraic group $G$ is 
\emph{observable in $G$}, if the homogeneous space $G/H$ is quasi-affine; we refer to  
\cite[Theorem 2.1]{Gr97} for further characterizations of observable subgroups. Also,
recall that $H$ is observable in $G$ if and only if so is the radical $R(H)$ (the largest
connected solvable normal subgroup of $H$), see [loc.~cit., Theorem 2.7]. Moreover,
any closed commutative subgroup is observable by [loc.~cit., Corollary 2.9]. As a consequence,
\emph{$H$ is observable in $G$ if $R(H)$ is commutative.}
\end{remarks}

We will encounter quasi-affine homogeneous spaces $G/H$ where $H \subset G$ is a possibly
non-reduced subgroup scheme. This presents no additional difficulty in view of the following
result:

\begin{lemma}\label{lem:obs}
Let $G$ be an algebraic group, $H$ a closed subgroup scheme, and $H_{\red} \subset H$
the associated reduced subgroup scheme (i.e., the largest algebraic subgroup).

\smallskip

\noindent
{\rm (i)} If $G/H$ is quasi-affine, then $H$ contains $G_{\ant}$. In other words, $G$ acts 
on $G/H$ via its affinization $G/G_{\ant}$.

\smallskip

\noindent
{\rm (ii)} $G/H$ is quasi-affine if and only if so is $G_0/(G_0 \cap H)$.

\smallskip

\noindent
{\rm (iii)} $G/H$ is quasi-affine if and only if so is $G/H_{\red}$.
\end{lemma}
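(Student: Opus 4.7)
The plan is to establish the three assertions in order. Part (i) is an immediate application of Lemma \ref{lem:inv}; parts (ii) and (iii) reduce the quasi-affineness of $G/H$ to simpler situations by exploiting, respectively, the finiteness of $G/G_0$ and the infinitesimal nature of $H/H_{\red}$.

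For (i), I would consider the quotient morphism $q\colon G \to G/H$. The image of $G_{\ant}$ in $\Aut(G)$ by left translations is an anti-affine algebraic group (being a quotient of the anti-affine group $G_{\ant}$), hence contained in the largest anti-affine subgroup $\cH \subset \Aut(G)$. By Lemma \ref{lem:inv} applied to $Z = G$ and $W = G/H$, the morphism $q$ is $\cH$-invariant; evaluating the resulting identity $q(zg) = q(g)$ at $g = e_G$ yields $z \in H$ for every $z \in G_{\ant}$, i.e., $G_{\ant} \subset H$. The reformulation that $G$ acts on $G/H$ through $G/G_{\ant}$ is then immediate from the characterization of $G/G_{\ant}$ as the affinization.

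For (ii), the quotient $G/G_0 H$ is a finite (reduced) scheme since $G/G_0$ is, so one may decompose $G$ as a finite disjoint union $\bigsqcup_i g_i(G_0 H)$ of left cosets of $G_0 H$. Passing to the quotient by the right $H$-action yields
$$
G/H \;\cong\; \bigsqcup_i g_i \cdot (G_0H/H) \;\cong\; \bigsqcup_i G_0/(G_0 \cap H).
$$
Since quasi-affineness is inherited by (clopen) open subschemes and by finite disjoint unions, both implications follow at once.

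For (iii), the morphism $G/H_{\red} \to G/H$ is a torsor under $F := H/H_{\red}$, which is a finite infinitesimal group scheme; in particular this morphism is finite and hence affine. If $G/H$ is quasi-affine, the standard fact that an affine morphism to a quasi-affine scheme has quasi-affine source gives $G/H_{\red}$ quasi-affine. For the converse, I would embed $G/H_{\red}$ as an open subscheme of the affine scheme $X' := \Spec \cO(G/H_{\red})$; by functoriality the $F$-action extends to $X'$, and the finite quotient $X'/F = \Spec \cO(G/H_{\red})^F$ is affine. Since the quotient map $X' \to X'/F$ is faithfully flat and hence open, the image of the $F$-stable open subscheme $G/H_{\red}$ is open in $X'/F$; uniqueness of the geometric quotient identifies this image with $G/H = (G/H_{\red})/F$, so $G/H$ is quasi-affine.

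The main technical care will be needed in (iii), to verify that passing to invariants under the possibly non-reduced group $F$ sends the open immersion $G/H_{\red} \hookrightarrow X'$ to an open immersion $G/H \hookrightarrow X'/F$; this rests on flatness of quotients by finite group schemes and the universal property of the geometric quotient, both standard but warranting explicit mention. Parts (i) and (ii) should follow very quickly once the tools of Section \ref{subsec:csg}, especially Lemma \ref{lem:inv}, are in hand.
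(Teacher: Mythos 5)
Parts (i) and (ii) of your argument are correct and essentially coincide with the paper's: (i) amounts to the observation that an anti-affine group acts trivially on regular functions, hence on the quasi-affine variety $G/H$ (your detour through Lemma \ref{lem:inv} is the same computation), and (ii) is the decomposition of $G/H$ into finitely many clopen copies of $G_0/(G_0 \cap H)$. In (iii), your forward implication is a genuinely different, and in fact slicker, route than the paper's: the paper builds a $G$-equivariant locally closed immersion of $G/H_{\red}$ by taking the integral closure of a $G$-stable subalgebra of $\cO(G/H)$ inside $k(G/H_{\red})$, whereas you only use that $G/H_{\red} \to G/H$ is finite, hence affine, and that quasi-affineness ascends along affine morphisms. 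That works, but justify finiteness by faithfully flat descent along $G \to G/H$ (the base change is $G \times (H/H_{\red}) \to G$ with $H/H_{\red}$ a finite scheme), not by your torsor claim, which presupposes exactly what fails below.

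The genuine gap is in your converse implication of (iii). It rests on $F := H/H_{\red}$ being a finite group scheme acting on $G/H_{\red}$ with quotient $G/H$, i.e., on $H_{\red}$ being normal in $H$; this can fail. For instance, in characteristic $p$ take $H = \alpha_p \rtimes \mu_n \subset \bG_a \rtimes \bG_m \subset \GL_2$ with $n > 1$ prime to $p$: then $H_{\red} = \mu_n$, and conjugating the point $(0,t)$ by an $S$-point $(a,1)$ of $\alpha_p$ gives $(a(1-t),t)$, which is not a point of $\mu_n$; so $H_{\red}$ is not normal, $H/H_{\red}$ is not a group scheme, and the right $H$-action on $G$ does not descend to $G/H_{\red}$, so there is no $F$-action and no identification $(G/H_{\red})/F = G/H$. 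A second flaw, even when $H_{\red}$ is normal: the map $X' \to X'/F$, with $X' = \Spec\,\cO(G/H_{\red})$ (note this ring need not even be finitely generated), is not faithfully flat in general, since the extended $F$-action need not be free and quotients by non-free actions of finite group schemes are not flat; the openness you need could instead be salvaged because for infinitesimal $F$ the quotient map is a homeomorphism, but that is not the argument you gave. The paper's proof of this direction avoids group quotients altogether: it uses only that $G/H_{\red} \to G/H$ is finite and purely inseparable, so that $W^q \subset \cO(G/H)$ for a suitable power $q$ of the characteristic exponent, and then produces via integral closure a finitely generated $G$-stable subalgebra of $\cO(G/H)$ whose associated equivariant map $G/H \to V^*$ is a locally closed immersion. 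You need to repair your converse along such lines, or otherwise prove descent of quasi-affineness along the finite purely inseparable surjection $G/H_{\red} \to G/H$ without invoking an action of $H/H_{\red}$.
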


\begin{proof}
(i) The anti-affine group $G_{\ant}$ acts trivially on the rational $G$-module 
$\cO(G/H)$, and hence on $G/H$ as the latter is quasi-affine.

(ii) follows from the fact that $G_0$ acts on $G/H$ with finitely many orbits, 
all of them being closed and isomorphic as varieties to $G_0/(G_0 \cap H)$. 
Indeed, for any $g \in G$, the $G_0$-orbit
of $g H \in G/H$ is isomorphic to $G_0/(G_0 \cap \Int(g) H) = G_0/\Int(g)(G_0 \cap H)$,
where $\Int(g): G \to G$ denotes the conjugation by $g$. Moreover, $\Int(g)$ induces an 
isomorphism of varieties $G_0/(G_0 \cap H) \to G_0/\Int(g)(G_0 \cap H)$.

(iii) By (ii) and the equality $(G_0 \cap H)_{\red} = G_0 \cap H_{\red}$, we may assume
that $H$ is connected.

If $G/H$ is quasi-affine, then we may find a finite-dimensional vector space 
$V \subset \cO(G/H)$ such that the associated morphism $G/H \to V^*$ is a locally
closed immersion. Then $V$ is contained in a finite-dimensional $G$-submodule of 
$\cO(G/H)$, and hence we may assume that $V$ itself is $G$-stable. Let 
$R \subset \cO(G/H)$ denote the subalgebra generated by $V$, and $S$ the integral closure 
of $R$ in the field of rational functions $k(G/H_{\red})$. Then $S$ is a $G$-stable, 
finitely generated algebra; thus, we may find a finite-dimensional $G$-submodule 
$W \subset S$ that generates $S$. Moreover, since the fraction field of $R$ is $k(G/H)$, 
we see that $k(G/H_{\red})$ is the fraction field of $S$. Thus, the natural map 
$G/H_{\red} \to W^*$ is birational; since it is $G$-equivariant, it is a locally 
closed immersion. Hence $G/H_{\red}$ is quasi-affine.

Conversely, assume that $G/H_{\red}$ is quasi-affine and choose a finite-dimensional
$G$-submodule $W \subset \cO(G/H_{\red})$ that yields a locally closed immersion
$G/H_{\red}\to W^*$. Since the natural morphism $G/H_{\red} \to G/H$ is purely inseparable,
there exists a positive integer $q$ (a power of the characteristic exponent of $k$) such
that $W^q \subset \cO(G/H)$. Let $R \subset \cO(G/H)$ denote the integral closure of 
the subalgebra generated by $W^q$. Then, as in the above step, $R$ is generated by a
finite-dimensional $G$-submodule $V$, and the corresponding map $G/H \to V^*$ is a 
locally closed immersion.
\end{proof}

\subsection{Associated vector bundles}
\label{subsec:avb}

A vector bundle
\begin{equation}\label{eqn:vec}
p: E \longrightarrow A
\end{equation}
is \emph{homogeneous}, if $E \cong \tau_a^*(E)$ for all $a \in A$. 
As for principal bundles, the isomorphisms of vector bundles
$E \to \tau_a^*(E)$ correspond bijectively to those lifts 
$\widetilde{\tau_a}: E \to E$ of $\tau_a : A \to A$ that are linear on fibers, i.e.,
that commute with the action of the multiplicative group $\bG_m$ by scalar multiplication
on fibers. In other words, \emph{$E$ is homogeneous if and only if each translation
$\tau_a$ lifts to some $\widetilde{\tau_a} \in \Aut^{\bG_m}(E)$} (the equivariant automorphism
group of $E$ viewed as a variety with $\bG_m$-action).

Also, the group functor $S \mapsto \Aut^{\bG_m}_S(E \times S)$ is represented by a group
scheme, locally of finite type, that we still denote by $\Aut^{\bG_m}(E)$. (This may be deduced 
from the analogous statement for principal bundles by using Lemma \ref{lem:homvec} (ii); 
alternatively, one may identify $\Aut^{\bG_m}(E)$ with a closed subfunctor of 
$\Aut(\bP(E \oplus O_A))$, where $\bP(E \oplus O_A)$ denotes the projective variety obtained
by completing $E$ with its ``hyperplane at infinity'' $\bP(E)$). Moreover, we have an
exact sequence of group schemes
$$
\CD
1 @>>> \Aut^{\bG_m}_A(E) @>>> \Aut^{\bG_m}(E) @>{p_*}>> \Aut(A),
\endCD
$$
where $\Aut_A^{\bG_m}(E)$ denotes the group scheme of automorphisms of the vector bundle 
$E$ over $A$, and $p_*$ assigns to each $\bG_m$-automorphism of $E$, the induced automorphism
of $A$ viewed as the categorical quotient $E/\!/\bG_m$. Note that $\Aut_A^{\bG_m}(E)$ is a 
connected affine algebraic group, since it is open in the affine space 
$\End_A^{\bG_m}(E) = H^0(A, End(E))$. Thus, $\Aut^{\bG_m}(E)$ is an algebraic group
(locally of finite type). 

We now relate the equivariant automorphism groups and notions of homogeneity for vector
bundles and principal bundles:

\begin{lemma}\label{lem:homvec}
Let $\pi: X \to A$ be a $G$-bundle, and $\rho: G \to \GL(V)$ a finite-dimensional representation.

\smallskip

\noindent
{\rm (i)} The associated vector bundle
$$
p: E := X \times^G V \longrightarrow A
$$
exists, and there is a natural homomorphism
$$
\Phi: \Aut^G(X) \longrightarrow \Aut^{\bG_m}(E)
$$ 
compatible with $\pi_* : \Aut^G(X) \to \Aut(A)$ and 
$p_* : \Aut^{\bG_m}(E) \to \Aut(A)$.

\smallskip

\noindent
{\rm (ii)} If $G = \GL(V)$ and $\rho = \id$, then $\Phi$ is an isomorphism. 

\smallskip

\noindent
{\rm (iii)} If the principal bundle $\pi$ is homogeneous, then so is the vector bundle
$p$. The converse holds if $G = \GL(V)$ and $\rho = \id$.
\end{lemma}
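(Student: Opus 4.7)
The plan is to address the three parts in turn, with (i) providing the construction that drives (ii) and (iii). For (i), I would first establish existence of $E := X \times^G V$ as a scheme. Since $\pi: X \to A$ is locally isotrivial (recalled just after Definition \ref{def:prin}) and $V$ is affine, an isotrivialising étale cover $U \to A$ satisfies $X \times_A U \cong G \times U$, so the associated bundle $V \times U \to U$ descends to $E \to A$ by faithfully flat descent; the fibrewise scalar $\bG_m$-action on $V$ commutes with $G$ and so descends as well, giving $E$ the structure of a vector bundle. To build $\Phi$, I would imitate the construction in Proposition \ref{prop:ext}(i): for $u \in \Aut^G(X)(S)$ covering $\varphi \in \Aut(A)(S)$, the morphism $u \times \id_V$ of $X \times V$ over $S$ is both $G$-equivariant (for the diagonal action) and $\bG_m$-equivariant (since $\bG_m$ acts trivially on $X$), hence descends to $\Phi(u) \in \Aut^{\bG_m}(E)(S)$ covering $\varphi$. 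Functoriality in $S$ and compatibility with composition upgrade this to a homomorphism of group schemes compatible with $\pi_*$ and $p_*$.

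For (ii), when $G = \GL(V)$ and $\rho = \id$, I would use the standard identification of $X$ with the frame bundle of $E$: $X$ represents the functor sending $S$ to the set of pairs $(\varphi, \phi)$, where $\varphi: S \to A$ and $\phi$ is a $\bG_m$-equivariant isomorphism of vector bundles $V \times S \to \varphi^*E$, with $\GL(V)$ acting by precomposition on $\phi$. Then an inverse $\Psi$ of $\Phi$ is given by postcomposition: $v \in \Aut^{\bG_m}(E)(S)$ covering $\varphi_0 \in \Aut(A)(S)$ sends a frame $(\varphi, \phi)$ to $(\varphi_0 \circ \varphi, \, v \circ \phi)$. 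Precomposition and postcomposition commute, so $\Psi(v)$ is $\GL(V)$-equivariant, and the identities $\Phi \circ \Psi = \id$ and $\Psi \circ \Phi = \id$ are immediate from the explicit definitions on $S$-points.

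Part (iii) then follows formally. The forward direction is a direct consequence of (i): given a lift $\widetilde{\tau_a} \in \Aut^G(X)(k)$ of $\tau_a$, the element $\Phi(\widetilde{\tau_a}) \in \Aut^{\bG_m}(E)(k)$ lifts $\tau_a$, so $p$ is homogeneous. The partial converse is immediate from (ii), since $\Phi$ being an isomorphism of group schemes over $\Aut(A)$ allows one to pull back any lift in $\Aut^{\bG_m}(E)$ to a lift in $\Aut^G(X)$.

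The main technical obstacle I anticipate lies in (ii): one must verify that $\Phi$ is an isomorphism of group schemes, not merely of $k$-points, which requires the functorial identification of $X$ with the frame bundle to be handled carefully over an arbitrary base $S$. In particular, one should be precise about what is meant by a $\bG_m$-equivariant bundle isomorphism over $S$ when $\varphi: S \to A$ is a general $S$-point rather than a translation of a $k$-point; once that bookkeeping is pinned down, both the construction of $\Psi$ and the verification that it is inverse to $\Phi$ become routine.
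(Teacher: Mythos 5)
Your proof is correct, and parts (i) and (iii) run exactly as in the paper: existence of $E$ by descent, $\Phi$ obtained by descending $u\times\id_V$ (the paper refers to the construction in Proposition \ref{prop:ext}), and (iii) read off from (i) and (ii). Where you genuinely diverge is (ii). The paper does not write down an inverse of $\Phi$; it compares the two exact sequences $1 \to \Aut^{\GL(V)}_A(X) \to \Aut^{\GL(V)}(X) \to \Aut(A)$ and $1 \to \Aut^{\bG_m}_A(E) \to \Aut^{\bG_m}(E) \to \Aut(A)$, observes that the induced map $\Psi: \Aut_A^{\GL(V)}(X) = \Hom^{\GL(V)}(X,\GL(V)) \to \Aut_A^{\bG_m}(E) \subset H^0(A,End(E))$ (an equivariant map $f: X \to \GL(V)$ viewed as an invertible section of $\End(E)$) is an isomorphism, and that $\pi_*$ and $p_*$ have the same image in $\Aut(A)$ since $u^*(X)\cong X$ as $\GL(V)$-bundles if and only if $u^*(E)\cong E$ as vector bundles; a diagram chase then gives that $\Phi$ is an isomorphism. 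You instead identify $X$ functorially with the frame bundle of $E$ and define the inverse directly by postcomposition on frames, checking the two composites on $S$-points. Both arguments rest on the same vector-bundle/$\GL(V)$-torsor correspondence; yours has the merit of producing the isomorphism of group functors in one stroke, uniformly in $S$ (provided you carry out the bookkeeping you flag, in particular that a $\bG_m$-equivariant automorphism of $E\times S$ over an automorphism of $A\times S$ is automatically linear on fibres, hence an isomorphism onto the pulled-back vector bundle), while the paper's route isolates the classical identification of bundle automorphisms with invertible sections of $\End(E)$ and reduces the rest to comparing images in $\Aut(A)$, which is closer to the familiar dictionary and avoids manipulating the frame-bundle functor over a general base.
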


\begin{proof}
(i) The existence of $p$ follows from faithfully flat descent. The construction of
$\Phi$ is similar to that in the proof of Proposition \ref{prop:ext} (ii).

(ii) follows from the correspondence between vector bundles with fiber $V$ and 
principal bundles under $\GL(V)$. Specifically, we have a commutative diagram
$$
\CD
1 @>>> \Aut_A^{\GL(V)}(X) @>>> \Aut^{\GL(V)}(X) @>{\pi_*}>> \Aut(A) \\
& & @V{\Psi}VV @V{\Phi}VV @V{=}VV \\
1 @>>> \Aut_A^{\bG_m}(E) @>>> \Aut^{\bG_m}(E) @>{p_*}>> \Aut(A), \\
\endCD
$$
where $\Psi: \Aut_A^{\GL(V)}(X) = \Hom^{\GL(V)}(X,\GL(V)) 
\to \Aut_A^{\bG_m}(E) \subset H^0(A, End(E))$ 
associates with each equivariant morphism $f : X \to \GL(V)\subset \End(V)$, 
the same $f$ viewed as a section of the associated bundle $\End(E)$. 
Thus, $\Psi$ is an isomorphism. Moreover, $\pi_*$ and $p_*$ have the same image, 
since for a given $u \in \Aut(A)$, the condition that $u^*(X) \cong X$ as $\GL(V)$-bundles 
is equivalent to $u^*(E) \cong E$ as vector bundles.

(iii) is a direct consequence of (i) and (ii).
\end{proof}

Next, we record the following preliminary result:

\begin{lemma}\label{lem:rep}
Any affine algebraic group $G$ admits a faithful finite-dimensional representation
$\rho: G \to \GL(V)$ such that the homogeneous space $\GL(V)/\rho(G)$ is quasi-affine;
in other words, $\rho(G)$ is observable in $\GL(V)$.
\end{lemma}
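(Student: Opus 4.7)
The strategy is to apply Grosshans's characterization of observable subgroups ([Gr97, Theorem 2.1], referenced in Remark \ref{rem:csg}(ii)): a closed subgroup $H$ of an affine algebraic group is observable if and only if every finite-dimensional rational $H$-module embeds as a sub-$H$-module of some finite-dimensional ambient module. Since $G$ is affine, a classical theorem gives a faithful finite-dimensional representation $\rho: G \hookrightarrow \GL(V)$, and the plan is to show that any such $\rho$ fulfills Grosshans's criterion with respect to $\GL(V)$.

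For this, the key observation is that the faithfulness of $\rho$ implies that the matrix coefficients of $V$ and $V^*$ generate $\cO(G)$ as a $k$-algebra. Given any finite-dimensional $G$-module $U$, the coaction $U \to U \otimes \cO(G) \cong \cO(G)^{\oplus \dim U}$ is a $G$-equivariant embedding whose image lies in a finite-dimensional sub-$G$-module $V_d^{\oplus \dim U}$, where $V_d \subset \cO(G)$ consists of polynomials of degree $\leq d$ in these matrix coefficients. Each such $V_d$ is a $G$-equivariant quotient of the $\GL(V)$-module $\mathrm{Sym}^{\leq d}((V \otimes V^*) \oplus (V^* \otimes V))$, via the evaluation-of-matrix-coefficients map.

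The main obstacle is to promote this quotient presentation into an actual embedding of $U$ as a sub-$G$-module of a finite-dimensional $\GL(V)$-module, as Grosshans's criterion demands. For reductive $G$ this step is immediate by complete reducibility, but for non-reductive $G$ the surjection $\mathrm{Sym}^{\leq d}(\cdots) \twoheadrightarrow V_d$ of $G$-modules need not split. One remedy is to show directly that every finite-dimensional $G$-module is a sub-$G$-module of some $\bigoplus_i V^{\otimes a_i} \otimes (V^*)^{\otimes b_i}$ — a general Peter-Weyl-type statement for faithful modules over arbitrary affine algebraic groups, provable by an inductive argument on the composition series of $U$. An alternative is to enlarge $V$ so as to include one-dimensional summands for all generators of the finitely generated character group $X(G)$, thereby ensuring that the character arising in Chevalley's realization of $\rho(G)$ as the stabilizer of a line becomes a restriction of a power of the determinant on $\GL(V)$, so that $\rho(G)$ becomes the stabilizer of a vector, hence observable. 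The technical intricacy of handling this splitting/extension issue in the non-reductive case, rather than the overall strategy, is the crux of the proof.
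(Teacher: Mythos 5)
Your first route cannot work: it is simply false that an arbitrary faithful representation $\rho: G \to \GL(V)$ has observable image, so no argument can ``show that any such $\rho$ fulfills Grosshans's criterion.'' Take $G = B$, the upper-triangular Borel subgroup of $\GL_2$, with its natural inclusion into $\GL_2$: this is faithful, yet $\GL_2/B \cong \bP^1$ is complete and not quasi-affine. Equivalently, your ``Peter--Weyl-type statement'' (every finite-dimensional $G$-module embeds as a sub-$G$-module of some $\bigoplus_i V^{\otimes a_i} \otimes (V^*)^{\otimes b_i}$ for an arbitrary faithful $V$) fails: the one-dimensional $B$-module given by the non-dominant character $\diag(t_1,t_2) \mapsto t_2$ occurs as a $B$-submodule of no finite-dimensional $\GL_2$-module, since $B$-eigenvectors in $\GL_2$-modules have dominant weight. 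So the splitting/extension issue you flag is not a technicality to be handled by induction on composition series; it is a genuine obstruction, and the whole point of the lemma is that $\rho$ must be \emph{constructed} specially.

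Your second remedy points in the right direction (this is in spirit what the paper does), but as stated it does not go through. Adding one-dimensional summands for generators of $X(G)$ does not make the relevant Chevalley character a restriction of a power of the determinant: the characters of the enlarged $\GL(V')$ are the powers of $\det_{V'}$, which restrict to a cyclic subgroup of $X(G)$, and moreover the Chevalley datum (module, line, character) depends on the chosen embedding, so the construction is circular. Worse, even if the character matched, the Chevalley module $N$ is a module for the original $\GL(M)$, not for $\GL(V')$, so you cannot twist it by $\det_{V'}$ to convert the line-stabilizer into a vector-stabilizer inside $\GL(V')$. The paper's proof supplies exactly the two missing devices: (1) keep $N$ but let the reductive group $\GL(M)\times\bG_m$ act by $(g,t)\cdot x = t^{-1}g\cdot x$; the isotropy subgroup scheme of a nonzero point of the Chevalley line is then precisely $(\iota,\chi)(G)$, so $(\GL(M)\times\bG_m)/(\iota,\chi)(G)$ is a locally closed orbit in $N$, hence quasi-affine; (2) embed $\GL(M)\times\bG_m = \GL(M)\times\GL(1)$ in $\GL(M\oplus k)$ and identify $\GL(M\oplus k)/\rho(G)$ with $\GL(M\oplus k)\times^{\GL(M)\times\bG_m}\bigl((\GL(M)\times\bG_m)/(\iota,\chi)(G)\bigr)$, a locally closed subvariety of the total space of a vector bundle over the affine variety $\GL(M\oplus k)/(\GL(M)\times\bG_m)$ (equivalently, one may invoke transitivity of observability through this reductive intermediate subgroup). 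Without the auxiliary $\bG_m$-factor turning a line-stabilizer into a vector-stabilizer, and without this descent step to the full general linear group, your sketch does not yield the lemma.
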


\begin{proof}
Choose a faithful finite-dimensional representation 
$$ 
\iota : G \longrightarrow \GL(M).
$$ 
By a theorem of Chevalley, there exist a $\GL(M)$-module $N$ and a line 
$\ell \subset N$ such that $G$ is the isotropy subgroup scheme of $\ell$ in
$\GL(M)$. Then $G$ acts on $\ell$ via a character $\chi : G \to \bG_m$. This yields
a faithful homomorphism
$$
(\iota, \chi) : G \longrightarrow \GL(M) \times \bG_m.
$$
Moreover, $\GL(M) \times \bG_m$ acts linearly on $N$ via 
$$
(g,t) \cdot x := t^{-1} g \cdot x,
$$
and the isotropy subgroup scheme of any point $x \in \ell$ for this action is exactly
$(\iota \times \chi)(G)$. It follows that the homogeneous space
$$
(\GL(M) \times \bG_m)/(\iota, \chi)(G) \cong (\GL(M) \times \bG_m) \cdot x
$$
is a locally closed subvariety of $N$, and hence is quasi-affine. Next, the natural
embedding
$$
\GL(M) \times \bG_m = \GL(M) \times \GL(1) \subset \GL(M \oplus k)
$$
yields a faithful representation $\rho: G \to \GL(M \oplus k)$, and the homogeneous
space
$$
\GL(M\oplus k)/\rho(G) = \GL(M \oplus k) \times^{\GL(M) \times \bG_m}
(\GL(M) \times \bG_m)/(\iota, \chi)(G)
$$
is a locally closed subvariety of $\GL(M \oplus k)\times^{\GL(M) \times \bG_m} N$.
The latter is the total space of a vector bundle over the affine variety
$\GL(M \oplus k)/(\GL(M) \times \bG_m)$, and hence is affine as well. Thus, 
$\GL(M \oplus k)/\rho(G)$ is quasi-affine. 
\end{proof}

We may now state the main result of this subsection:

\begin{theorem}\label{thm:ass}
Given a $G$-bundle (\ref{eqn:prin}) where $G$ is affine, the following conditions are
equivalent:

\smallskip

\noindent
{\rm (i)} $\pi$ is homogeneous.

\smallskip

\noindent
{\rm (ii)} All vector bundles associated with irreducible representations
are homogeneous. 

\smallskip

\noindent
{\rm (iii)} All associated vector bundles are homogeneous.

\smallskip

\noindent
{\rm (iv)} The associated vector bundle (\ref{eqn:vec}) is homogeneous for some
faithful finite-dimensional representation $\rho: G \to \GL(V)$ such that $\rho(G)$
is observable in $\GL(V)$. (Such a representation exists by Lemma \ref{lem:rep}).
 
\end{theorem}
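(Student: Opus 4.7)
The plan is to establish the cycle (i)$\Rightarrow$(iii)$\Rightarrow$(ii)$\Rightarrow$(iv)$\Rightarrow$(i); the implication (iii)$\Rightarrow$(iv) is a bonus coming for free from Lemma \ref{lem:rep}. The first two arrows are essentially formal, while the substance of the theorem is concentrated in (iv)$\Rightarrow$(i) and in the closing arrow (ii)$\Rightarrow$(iv).

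For (i)$\Rightarrow$(iii), let $\rho : G \to \GL(V)$ be any finite-dimensional representation with associated $p : E \to A$. Lemma \ref{lem:homvec}(i) supplies a homomorphism $\Phi : \Aut^G(X) \to \Aut^{\bG_m}(E)$ compatible with the projections to $\Aut(A)$, so if $\pi_*(\Aut^G(X))$ contains the subgroup $A \subset \Aut(A)$ of translations, then $p_*(\Aut^{\bG_m}(E))$ does as well, and $p$ is homogeneous. The implication (iii)$\Rightarrow$(ii) is trivial.

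For (iv)$\Rightarrow$(i), start from the faithful $\rho : G \to \GL(V)$ provided by (iv), with $\GL(V)/\rho(G)$ quasi-affine and with associated vector bundle $E = X \times^G V$ homogeneous. Extend the structure group via Proposition \ref{prop:ext}(i) to obtain a principal $\GL(V)$-bundle $\pi' : X' := \GL(V) \times^G X \to A$. Since the vector bundle associated with $\pi'$ and the identity representation of $\GL(V)$ is canonically isomorphic to $E$, Lemma \ref{lem:homvec}(ii)--(iii) transfers homogeneity of $E$ to homogeneity of $\pi'$ as a $\GL(V)$-bundle. The hypotheses on $\rho$ are then exactly those of Proposition \ref{prop:ext}(ii), which feeds homogeneity back from $\pi'$ to $\pi$.

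The main obstacle is (ii)$\Rightarrow$(iv). Using Lemma \ref{lem:rep}, choose a faithful $\rho : G \to \GL(V)$ with $\rho(G)$ observable in $\GL(V)$, and pick a composition series $0 = V_0 \subset V_1 \subset \cdots \subset V_n = V$ whose subquotients $W_i := V_i/V_{i-1}$ are irreducible. The associated bundle $E = X \times^G V$ acquires a corresponding filtration by $E_i = X \times^G V_i$ whose successive quotients are the bundles $X \times^G W_i$, and each of these is homogeneous by hypothesis (ii). To conclude that $E$ itself is homogeneous, I invoke the Miyanishi--Mukai equivalence (i)$\Leftrightarrow$(ii) recalled in the introduction: each $X \times^G W_i$ is an iterated extension of algebraically trivial line bundles, hence so is $E$, and hence by the same theorem $E$ is homogeneous. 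In this way the whole difficulty of (ii)$\Rightarrow$(iv) reduces to the already established fact that the class of homogeneous vector bundles over $A$ is closed under extensions, which is exactly what Miyanishi--Mukai provides.
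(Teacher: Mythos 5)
Your proof is correct, and its skeleton coincides with the paper's: (i) to the associated-bundle statements via Lemma \ref{lem:homvec}, (iv)$\Rightarrow$(i) via extension of structure group to $\GL(V)$ (Lemma \ref{lem:homvec} (ii)--(iii)) followed by Proposition \ref{prop:ext} (ii) using observability, and the passage from irreducible subquotients to an arbitrary associated bundle via a composition series plus the fact that homogeneity of vector bundles is closed under extensions; the rearrangement of the implication cycle is immaterial. The one substantive divergence is how you justify that extension-closedness: you import the full Miyanishi--Mukai equivalence ``homogeneous $\Leftrightarrow$ iterated extension of algebraically trivial line bundles,'' whereas the paper cites only Mukai's Fourier--Mukai criterion \cite[Theorem 4.19]{Muk79} and, moreover, sketches a self-contained group-theoretic alternative (the largest anti-affine subgroup of $\cG_1 \times_A \cG_2$ acts trivially on $\Ext^1_A(E_2,E_1)$, hence lifts translations to any extension). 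For Theorem \ref{thm:ass} as a standalone statement your citation is perfectly legitimate; but be aware that Theorem \ref{thm:ass} feeds into Corollary \ref{cor:uni} and Theorem \ref{thm:main}, which the introduction then uses to rederive precisely the Miyanishi--Mukai implications (ii)$\Rightarrow$(i) and (iii)$\Rightarrow$(i) for vector bundles, so your choice would make that rederivation partially circular; the paper's phrasing (and especially the anti-affine $\Ext$ argument) is designed to avoid exactly this dependence.
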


\begin{proof}
(i)$\Rightarrow$(ii) follows from Lemma \ref{lem:homvec} (iii).

(ii)$\Rightarrow$(iii) holds since any extension of homogeneous vector bundles
is homogeneous, as follows from \cite[Theorem 4.19]{Muk79} that characterizes
homogeneity in terms of the Fourier-Mukai transform. 

We sketch an alternative proof of this fact via methods of algebraic groups: 
given two homogeneous vector bundles $E_1$, $E_2$ over $A$ with respective anti-affine 
groups of automorphisms $\cG_1$, $\cG_2$, the fibered product 
$\cG_1 \times_A \cG_2$ acts linearly on $\Ext^1_A(E_2,E_1)$ and its largest
anti-affine subgroup $\cG$ acts trivially. Thus, for any extension
$0 \to E_1  \to E \to E_2 \to 0$, we see that $\cG$ acts on $E$ by lifting the 
translations in $A$.   

(iii)$\Rightarrow$(iv) is obvious.

(iv)$\Rightarrow$(i) The $\GL(V)$-bundle $\GL(V) \times^G X \to A$ is homogeneous 
by Lemma \ref{lem:homvec} (iii). Since $\GL(V)/\rho(G)$ is quasi-affine, it follows
that $\pi$ is homogeneous, in view of Proposition \ref{prop:ext} (ii).
\end{proof}

As a direct consequence, we obtain:

\begin{corollary}\label{cor:red}
Let $V$ be a finite-dimensional vector space, and $G \subset \GL(V)$ a reductive subgroup.
Then a $G$-bundle $\pi: X \to A$ is homogeneous if and only if the associated vector bundle 
$p: X \times^G V \to A$ is homogeneous.
\end{corollary}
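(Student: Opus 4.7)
The plan is to deduce Corollary \ref{cor:red} almost immediately from Theorem \ref{thm:ass} once we verify the observability hypothesis.

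The forward implication is the easy direction: if $\pi$ is homogeneous, then the associated vector bundle $p : X \times^G V \to A$ is homogeneous by Lemma \ref{lem:homvec} (iii) applied to the tautological representation $\rho = \id : G \hookrightarrow \GL(V)$.

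For the converse, I would invoke Theorem \ref{thm:ass}, specifically the implication (iv)$\Rightarrow$(i). To apply it, one must check that the inclusion $G \subset \GL(V)$ is a faithful representation whose image is observable in $\GL(V)$. Faithfulness is automatic from the inclusion. For observability, I would appeal to Remark \ref{rem:csg} (ii): since $G$ is reductive, its radical $R(G)$ is a central torus, which is in particular commutative; consequently $G$ is observable in $\GL(V)$. With this in hand, (iv) of Theorem \ref{thm:ass} is satisfied with $\rho = \id$ as soon as $p$ is assumed homogeneous, and the theorem then delivers the homogeneity of $\pi$.

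The only real point to watch is the observability step; everything else is just packaging previously established results. There is no substantial obstacle, since for a reductive group the radical is a torus and the general criterion in Remark \ref{rem:csg} (ii) handles it directly. (If one preferred, one could equally cite the fact that reductive subgroups are observable in any ambient algebraic group, but routing through the commutativity of $R(G)$ is shorter and uses only what is explicitly recalled in the excerpt.)
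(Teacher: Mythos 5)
Your proof is correct and follows the paper's intended argument: the corollary is deduced from Theorem \ref{thm:ass} (with the forward direction from Lemma \ref{lem:homvec} (iii) and the converse from (iv)$\Rightarrow$(i) applied to $\rho=\id$), the only substantive point being the observability of $G$ in $\GL(V)$. Where you differ from the paper is in how you certify that point: the paper simply notes that $\GL(V)/G$ is \emph{affine} (Matsushima's criterion for reductive subgroups of a reductive group), whereas you argue that the radical $R(G)$ is a torus, hence commutative, so $G$ is observable by the criterion recalled in Remark \ref{rem:csg} (ii) (Grosshans' results that observability passes to and from the radical, and that commutative subgroups are observable). Your route only yields quasi-affineness, but that is all Theorem \ref{thm:ass} needs, and it has the merit of relying solely on facts explicitly recalled in the paper and of sidestepping Matsushima's criterion, whose validity in positive characteristic rests on geometric reductivity (Haboush/Richardson); the paper's route is a one-line appeal to a stronger classical statement. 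A minor point worth noting: if $G$ is not assumed connected, $R(G)$ is by definition the radical of $G_0$ and is central only in $G_0$, but it is still a torus, hence commutative, so your argument goes through unchanged.
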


(Indeed, the variety $\GL(V)/G$ is affine). 

Note that the same statement holds, with a completely different proof, for $G$-bundles 
over complete rational homogeneous varieties in characteristic $0$; see 
\cite[Theorem 1.1]{BT10}.

Another straightforward consequence of Theorem \ref{thm:ass} is the following result: 

\begin{corollary}\label{cor:uni}
Let $G$ be an affine algebraic group, $U \subset G$ a normal unipotent subgroup scheme, 
$\pi: X \to A$ a $G$-bundle, and $\bar{\pi} : \bar{X} := X/U \to A$
the associated principal bundle under $\bar{G}:= G/U$.
Then $\pi$ is homogeneous if and only if so is $\bar{\pi}$.

In particular, any principal bundle under a unipotent group is homogeneous.
\end{corollary}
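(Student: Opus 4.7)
The plan is to handle the two directions separately, using Theorem \ref{thm:ass} as the main bridge between principal and vector bundles.

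For the forward implication (``$\pi$ homogeneous $\Rightarrow$ $\bar\pi$ homogeneous''), I would simply invoke Proposition \ref{prop:ext} (i): $\bar\pi$ is the extension of structure group of $\pi$ along the quotient homomorphism $G \to \bar G = G/U$, and extension of structure group preserves homogeneity.

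For the harder, reverse implication, the strategy is to reduce to showing that every $G$-vector bundle associated with $\pi$ is homogeneous, then apply Theorem \ref{thm:ass} (iv)$\Rightarrow$(i). First, invoke Lemma \ref{lem:rep} to choose a faithful representation $\rho : G \to \GL(V)$ with $\rho(G)$ observable in $\GL(V)$; it is enough to prove that $E := X \times^G V \to A$ is homogeneous. The key observation is that, since $U$ is a normal unipotent subgroup scheme of $G$, the subspace $V^U$ of $U$-fixed vectors is a non-zero $G$-submodule of $V$ (any finite-dimensional unipotent module has non-trivial fixed points, and normality makes $V^U$ $G$-stable). Iterating, I obtain a $G$-stable filtration
$$
0 = V_0 \subset V_1 \subset \cdots \subset V_n = V
$$
on each of whose graded pieces $W_i := V_i/V_{i-1}$ the subgroup $U$ acts trivially, so that $W_i$ is in fact a $\bar G$-module. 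Consequently $E$ admits a filtration by sub-vector bundles $E_i := X \times^G V_i$ whose graded pieces satisfy
$$
E_i/E_{i-1} \;\cong\; X \times^G W_i \;\cong\; \bar X \times^{\bar G} W_i.
$$
Since $\bar\pi$ is homogeneous, each such graded piece is a homogeneous vector bundle by Theorem \ref{thm:ass} (i)$\Rightarrow$(iii). The fact (established in the proof of (ii)$\Rightarrow$(iii) in Theorem \ref{thm:ass}) that an extension of homogeneous vector bundles is again homogeneous then shows, by induction on $i$, that $E$ itself is homogeneous. Applying Theorem \ref{thm:ass} (iv)$\Rightarrow$(i) to the chosen observable $\rho$ yields the homogeneity of $\pi$.

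The ``in particular'' statement is immediate: if $G$ is unipotent, take $U = G$, so that $\bar G$ is trivial and $\bar\pi : A \to A$ is the identity, which is trivially homogeneous; hence $\pi$ is homogeneous by the equivalence just proved.

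The only real obstacle is the reverse implication, and the crux is the unipotent filtration argument: one needs both that $V^U \neq 0$ (unipotence) and that $V^U$ is $G$-stable (normality of $U$), together with the preservation of homogeneity under extensions of vector bundles, which is already available from Theorem \ref{thm:ass}.
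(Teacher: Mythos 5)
Your proof is correct and takes essentially the paper's route: both arguments reduce to Theorem \ref{thm:ass} together with the fact that a normal unipotent subgroup scheme acts trivially on irreducible subquotients of any finite-dimensional $G$-module, so that the relevant associated vector bundles for $\pi$ and $\bar{\pi}$ coincide. The paper compresses this into one line by invoking criterion (ii) of Theorem \ref{thm:ass} (every irreducible representation of $G$ factors through $\bar{G}$), whereas you unfold the same fixed-vector fact into a filtration argument via criterion (iv), Lemma \ref{lem:rep} and the stability of homogeneity under extensions of vector bundles --- a correct, if slightly longer, rendering of the same idea.
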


(Indeed, any irreducible representation $\rho: G \to \GL(V)$ factors through $\bar{G}$).

Combining Corollary \ref{cor:uni} with Proposition \ref{prop:conn}, Theorem \ref{thm:conn}
and Theorem \ref{thm:aff}, we obtain the equivalence (a)$\Leftrightarrow$(c) in Theorem 
\ref{thm:main}. The equivalences (a)$\Leftrightarrow$(d)$\Leftrightarrow$(e)
will be obtained in Subsection \ref{subsec:con}, and (a)$\Leftrightarrow$(b) in Subsection
\ref{subsec:hbhs}. 

To conclude this subsection, we provide the example announced in Remark \ref{rem:csg} (i):

\begin{example}\label{ex:ext}
Consider a non-trivial line bundle $L$ over $A$, generated by $n$ global sections. 
Then $L$ fits into an exact sequence of vector bundles over $A$
$$
0 \longrightarrow E \longrightarrow O_A^n \longrightarrow L  \longrightarrow 0
$$
where $O_A$ denotes the trivial line bundle. This corresponds to a reduction of the trivial 
$\GL_n$-bundle $\pi' : \GL_n \times A \to A$ 
to a principal $P$-bundle $\pi: X \to A$, where $P \subset \GL_n$ denotes the parabolic subgroup
that stabilizes a hyperplane. Taking for $U$ the unipotent radical of $P$, we see that 
$\bar{P} \cong \GL_{n-1} \times \bG_m$ and the associated bundle $\bar{\pi} : \bar{X} \to A$ 
is the product over $A$ of the principal bundles associated with $E$ and $L$. Since $L$ is not
homogeneous, the same holds for $\bar{\pi}$, and hence $\pi$ is not homogeneous as well; but 
$\pi'$ is obviously homogeneous.   
\end{example}

\subsection{Connections}
\label{subsec:con}

We first recall some classical constructions associated with principal bundles
(see \cite{At57b}) in our special setting. For a $G$-bundle $\pi: X \to A$, 
we have an exact sequence of tangent sheaves on $X$
$$
\CD
0 @>>> T_{\pi} @>>> T_X @>{d\pi}>> \pi^* T_A @>>> 0 
\endCD
$$
and isomorphisms of sheaves of Lie algebras
$$
\CD \cO_X \otimes \Lie(G) @>{\cong}>> T_{\pi} \endCD, \quad 
\CD \cO_A \otimes \Lie(A) @>{\cong}>> T_A \endCD
$$
with an obvious notation. This yields an exact sequence of sheaves of Lie algebras on $X$
\begin{equation}\label{eqn:tan}
\CD
0 @>>> \cO_X \otimes \Lie(G) @>>> T_X @>{d\pi}>> \cO_X \otimes \Lie(A) @>>> 0.
\endCD
\end{equation}
Also, all these sheaves are $G$-linearized, where $G$ acts on $\Lie(G)$ via its adjoint 
representation, and $G$ acts trivially on $\Lie(A)$; moreover, the morphisms in 
(\ref{eqn:tan}) preserve the linearizations. 

Now the category of $G$-linearized locally free sheaves on $X$ is equivalent to that of 
locally free sheaves on $A$, via the invariant direct image $\pi_*^G$ and the pull-back $\pi^*$. 
Thus, (\ref{eqn:tan}) corresponds to an exact sequence of sheaves of Lie algebras on $A$,
the \emph{Atiyah sequence}
\begin{equation}\label{eqn:ati}
\CD
0 @>>> \pi_*\big( \cO_X \otimes \Lie(G) \big)^G @>>> \At(X) @>{d\pi}>> 
\cO_A \otimes \Lie(A) @>>> 0
\endCD
\end{equation}
where the left-hand side is the sheaf of local sections of the associated vector bundle
$X \times^G \Lie(G) \to A$ (the \emph{adjoint bundle}), and $\At(X) := \pi_*(T_X)^G$.

Taking global sections in (\ref{eqn:ati}) yields an exact sequence of Lie algebras
\begin{equation}\label{eqn:lie}
\CD
0 @>>> \Hom^G\big( X, \Lie(G) \big) @>>> H^0(X,T_X)^G @>{d\pi}>> \Lie(A)
\endCD
\end{equation}
which may also be obtained from the exact sequence of group schemes (\ref{eqn:sch}) by taking
their Lie algebras (see e.g. \cite[Section 4]{Br10b}).

Moreover, the $G$-equivariant splittings of the exact sequence (\ref{eqn:tan}) correspond to 
the splittings of (\ref{eqn:ati}), i.e., to the sections of 
$d\pi : H^0(X,T_X)^G \to \Lie(A)$. Such a section
$$
D : \Lie(A) \longrightarrow H^0(X,T_X)^G
$$ 
is called a \emph{connection}; $D$ is \emph{integrable} (or \emph{flat}) if it is a homomorphism 
of Lie algebras, i.e., its image is an abelian Lie subalgebra.

We may now state the following algebraic version of a homogeneity criterion for holomorphic
bundles over complex tori, due to Matsushima (see \cite[Th\'eor\`eme 1]{Ma59}) and Morimoto 
(\cite[Th\'eor\`eme 1]{Mo59}):

\begin{proposition}\label{prop:con}
In characteristic $0$, the following conditions are equivalent for a $G$-bundle $\pi : X \to A$:

\smallskip

\noindent
{\rm (i)} $\pi$ is homogeneous.

\smallskip

\noindent
{\rm (ii)} $\pi$ admits an integrable connection. 

\smallskip

\noindent
{\rm (iii)} $\pi$ admits a connection.
\end{proposition}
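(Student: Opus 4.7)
The implication (ii)$\Rightarrow$(iii) is immediate. The remaining two implications will both be handled via Proposition \ref{prop:aa} and the identification of $H^0(X,T_X)^G$ with $\Lie(\Aut^G(X))$ and of $d\pi: H^0(X,T_X)^G \to \Lie(A)$ with the differential of $\pi_*: \Aut^G(X) \to \Aut(A)$ at the identity, both of which are already in hand via the exact sequence (\ref{eqn:lie}) obtained by passing to Lie algebras in (\ref{eqn:sch}). The essential use of characteristic $0$ will be the fact that a homomorphism of algebraic groups is surjective on neutral components if and only if its differential at the identity is surjective onto $\Lie$ of the target; in positive characteristic neither direction holds in general (think of Frobenius and of $\ker$-Frobenius).

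\smallskip

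For (i)$\Rightarrow$(ii), assume $\pi$ is homogeneous and set $\cG := \Aut^G(X)_{\ant}$. By Proposition \ref{prop:aa}, $\pi_*: \cG \to A$ is surjective; in characteristic $0$ this forces the induced map of Lie algebras $d\pi_*: \Lie(\cG) \to \Lie(A)$ to be surjective. Choose any $k$-linear section $s : \Lie(A) \to \Lie(\cG)$ and compose it with the inclusion $\Lie(\cG) \hookrightarrow \Lie(\Aut^G(X)) = H^0(X,T_X)^G$ to obtain a section $D$ of $d\pi$, that is, a connection. Since $\cG$ is a connected commutative algebraic group, $\Lie(\cG)$ is an abelian Lie algebra, so the image of $D$ is automatically abelian and $D$ is integrable.

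\smallskip

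For (iii)$\Rightarrow$(i), assume $\pi$ admits a connection $D: \Lie(A) \to H^0(X,T_X)^G$. By construction $D$ is a section of $d\pi$, so the differential $d\pi_*: \Lie(\Aut^G(X)) \to \Lie(A)$ is surjective. In characteristic $0$, the scheme-theoretic image of $\pi_*$ restricted to the neutral component $\Aut^G(X)_0$ is a closed connected subgroup of $\Aut(A)_0 = A$ whose Lie algebra contains the image of $d\pi_*$, hence equals $\Lie(A)$; therefore this image equals $A$. Consequently $A \subset \pi_*(\Aut^G(X))$, which is the condition for homogeneity noted just before Proposition \ref{prop:aa}. (Equivalently, one invokes Proposition \ref{prop:aa} itself after observing that the largest anti-affine subgroup of $\pi_*^{-1}(A)_{\red,0}$ still surjects onto $A$, exactly as in the proof of that proposition.) Thus $\pi$ is homogeneous.

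\smallskip

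The main pitfall will be to remember that characteristic $0$ is needed in both nontrivial implications, and for the same underlying reason: the Lie functor is exact and conservative on smooth morphisms of connected algebraic groups in characteristic $0$, but not in positive characteristic. Aside from this, the argument is essentially a translation between the group-theoretic condition of Proposition \ref{prop:aa} and its infinitesimal counterpart.
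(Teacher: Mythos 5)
Your argument is correct and follows essentially the same route as the paper: (i)$\Rightarrow$(ii) by splitting the surjective differential of $\pi_{*\vert\cG}$ and using commutativity of $\cG$ for integrability, and (iii)$\Rightarrow$(i) by passing from surjectivity of $d\pi$ on $H^0(X,T_X)^G=\Lie(\Aut^G(X))$ to surjectivity of $\pi_*$ on neutral components, both via the identification of (\ref{eqn:lie}) with the Lie algebras of (\ref{eqn:sch}). Your proposal merely makes explicit the characteristic-$0$ hypotheses that the paper leaves implicit, which is a fair elaboration rather than a different proof.
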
  

\begin{proof}
(i)$\Rightarrow$(ii) The restriction $\pi_{* \vert \cG}: \cG \to A$ is surjective 
by Proposition \ref{prop:aa}, and hence so is 
$d\pi_{*\vert \cG} : \Lie \big( \cG \big) \to \Lie(A)$. 
Any splitting of the latter map yields an integrable connection, since $\cG$ is commutative.

(ii)$\Rightarrow$(iii) is left to the reader.

(iii)$\Rightarrow$(i) The map $d\pi : H^0(X,T_X)^G \to \Lie(A)$ is surjective, and hence  
$\pi_*: \Aut^G(X) \to \Aut(A)$ is surjective on neutral components.
\end{proof}

When $k = \bC$ and $G$ is a connected reductive group, the above statement
has also been proved by Biswas (see \cite[Theorem 3.1]{Bi04}), via very different arguments
based on semistability. The case of holomorphic principal bundles over
complex tori is treated by Biswas and Gomez (see \cite[Theorem 3.1]{BG08}) via the 
theory of Higgs $G$-bundles; in this setting, they also characterize homogeneous bundles
in terms of pseudostability and vanishing of characteristic classes. 

On the other hand, in positive characteristics, the group scheme $\Aut^G(X)$ 
need not be reduced (see Example \ref{ex:eagst} (iii)) and hence characterizing 
homogeneous bundles in terms of connections would make little sense.

\section{Structure}
\label{sec:st}

\subsection{Homogeneous bundles as homogeneous spaces}
\label{subsec:hbhs}

By a refinement of Proposition \ref{prop:aa}, we obtain our main structure result:

\begin{theorem}\label{thm:str}
Let $\pi : X \to A$ be a principal bundle under an algebraic group $G$ and consider
the $G \times \cG$-action on $X$, where $\cG \subset \Aut^G(X)$ denotes the largest anti-affine 
subgroup. Then the following conditions are equivalent:

\smallskip

\noindent
{\rm (i)} $\pi$ is homogeneous.

\smallskip

\noindent
{\rm (ii)} There is a $G \times \cG$-equivariant isomorphism
\begin{equation}\label{eqn:str}
X \cong G \times^H \cG,
\end{equation}
where $H \subset G$ is a closed subgroup scheme that also fits into an extension
of commutative group schemes
\begin{equation}\label{eqn:ext}
\CD
0 @>>> H @>>> \cG @>{\pi_*}>> A @>>> 0
\endCD
\end{equation}
via $\pi_* : \cG \to A$. (In particular, $H$ is commutative).

\smallskip

Moreover, the assignement $X \mapsto (\cG,H)$ yields a bijection between the 
isomorphism classes of $G$-bundles over $A$, and the equivalence classes of 
pairs consisting of a commutative subgroup scheme $H \subset G$
and an extension (\ref{eqn:ext}), where $\cG$ is anti-affine. 
Here two pairs are equivalent if the corresponding subgroups $H$, $H'$
are conjugate in $G$, and if in addition the resulting isomorphism 
$H \to H'$ lifts to an isomorphism of extensions. 
\end{theorem}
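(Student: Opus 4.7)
The plan is to first establish the equivalence (i) $\Leftrightarrow$ (ii) by analyzing the $G \times \cG$-action on $X$ and its stabilizers, and then verify that the forward assignment $X \mapsto (H,\cG)$ and the backward construction $(H,\cG) \mapsto G \times^H \cG$ are mutually inverse on the specified equivalence classes.

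For (i) $\Rightarrow$ (ii), I would invoke Proposition \ref{prop:aa} to get that $\pi_*\vert_\cG : \cG \to A$ is surjective; set $H$ to be its scheme-theoretic kernel, which is a closed commutative subgroup scheme of the commutative group $\cG$, fitting into the extension (\ref{eqn:ext}). Since $G$ acts simply transitively along fibers and $\cG$ covers translations of $A$, the combined action of $G \times \cG$ on $X$ is transitive. Fix $x_0 \in \pi^{-1}(0_A)$ and compute the stabilizer $S \subset G \times \cG$: applying $\pi$ to any $(g,\gamma) \in S$ forces $\pi_*(\gamma) = 0$, so $\gamma \in H$; and since $\pi^{-1}(0_A)$ is a $G$-torsor, there is a unique $\phi(\gamma) \in G$ with $\phi(\gamma) \cdot \gamma(x_0) = x_0$. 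A direct verification using the commutativity of $\cG$ and the $G$-equivariance of each $\gamma$ shows that $\phi : H \to G$ is a homomorphism of group schemes. For faithfulness of $\phi$, I would argue that if $\phi(\gamma) = e$ then $\gamma$ fixes $x_0$; since any $x \in X$ may be written as $\delta(g \cdot x_0)$ with $\delta \in \cG, g \in G$, commutativity of $\cG$ together with $G$-equivariance yields $\gamma(x) = x$, so $\gamma = e$ in $\Aut^G(X)$. A faithful morphism of finite type group schemes over a field is a closed immersion, so $H$ embeds in $G$; the stabilizer $S$ is then the graph of $h \mapsto \phi(h)^{-1}$, and one obtains the desired $G \times \cG$-equivariant isomorphism $X \cong (G \times \cG)/S \cong G \times^H \cG$.

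For (ii) $\Rightarrow$ (i), the group $\cG$ acts on $G \times^H \cG$ by translation on the second factor; this commutes with the $G$-action on the first factor and covers the translations of $A = \cG/H$, so Proposition \ref{prop:aa} gives homogeneity. For the bijection, the forward map is well-defined up to the prescribed equivalence: replacing $x_0$ by $g \cdot x_0$ conjugates $\phi$ by $g$, yielding an equivalent pair. Composition forward-then-backward returns to $X$ by the isomorphism of the first paragraph. Composition backward-then-forward requires, starting from $(H,\cG)$ and setting $Y := G \times^H \cG$, that the given $\cG$ (acting on the second factor) coincides with $\Aut^G(Y)_{\ant}$, with kernel $H$ recovering the original embedding into $G$.

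The main obstacle is this last identification. The inclusion $\cG \subset \Aut^G(Y)_{\ant}$ is immediate since $\cG$ is anti-affine, and both groups surject onto $A$ by Proposition \ref{prop:aa}. To show equality, set $\cG' := \Aut^G(Y)_{\ant}$ with kernel $H' := \cG' \cap \Aut^G_A(Y) \supseteq H$; then $\cG'/\cG \cong H'/H$ is anti-affine (as a quotient of the anti-affine $\cG'$), and $H'/H$ sits inside $\Aut^G_A(Y)/H$. Using Remark \ref{rem:red}, which identifies $\Aut^G_A(Y)_{\ant}$ with $G_{\ant}$, together with the structure of commutative extensions and a snake-lemma argument, one controls the anti-affine enlargements of $H$ inside $\Aut^G_A(Y)$ and concludes $H' = H$, hence $\cG' = \cG$; the embedding $H \hookrightarrow G$ recovered via the stabilizer construction at $x_0 = [e_G, e_\cG]$ returns the original one. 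The reductions of Subsection \ref{subsec:red} together with Proposition \ref{prop:aa}'s minimality statement in the affine case make this comparison most transparent when $G$ is affine, with the general case following from those reductions.
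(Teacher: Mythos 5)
Your proof of the equivalence (i)$\Leftrightarrow$(ii) is correct and is essentially the paper's argument: Proposition \ref{prop:aa} gives transitivity of $G \times \cG$ on $X$, and the scheme-theoretic stabilizer of a point projects with trivial kernel to both factors (your torsor argument for one kernel, and commutativity of $\cG$ plus $G$-equivariance plus faithfulness of the $\cG$-action for the other), whence $X \cong (G \times \cG)/S \cong G \times^H \cG$. The only cosmetic difference is that the paper takes $H$ to be the stabilizer itself and shows both projections $p,q$ are faithful, while you take $H = \Ker(\pi_{*\vert \cG})$ and construct the embedding $\phi: H \to G$; this is the same computation read in the other direction. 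The converse and the well-definedness of $X \mapsto (H,\cG)$ up to conjugacy (moving the base point) also match: the paper's entire justification of the ``moreover'' clause is precisely that the isotropy subgroup scheme is determined up to conjugacy in $G \times \cG$, hence in $G$.

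The gap is in your treatment of the backward-then-forward composition. For $Y = G \times^H \cG$ you need $\Aut^G(Y)_{\ant} = \cG$, equivalently $H' = H$ where $H' := \cG' \cap \Aut^G_A(Y)$ and $\cG' := \Aut^G(Y)_{\ant}$, and you propose to deduce this from Remark \ref{rem:red} via ``a snake-lemma argument''. For arbitrary $G$ that conclusion is false: by Remark \ref{rem:red}, $G_{\ant} \cong \Aut^G_A(Y)_{\ant}$ is anti-affine, hence contained in $\cG'$, so $G_{\ant} \subset H'$ always; if $H$ does not contain $G_{\ant}$ (for instance $H$ trivial and $G$ non-affine, so $Y$ is the trivial bundle with $\cG' = G_{\ant} \times A$ and $H' = G_{\ant}$), then $H' \neq H$ and the pair $(H,\cG)$ is not recovered. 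This is exactly the phenomenon recorded in Remark \ref{rem:hbhs} (ii): the $H$ produced by the theorem always contains $G_{\ant}$, and the classification is ``natural'' only for affine $G$. In the affine case your appeal to the minimality clause of Proposition \ref{prop:aa} settles the point in one line ($\cG'$ is the smallest subgroup scheme mapping onto $A$, and $\cG$ maps onto $A$, so $\cG' \subset \cG$ and hence $\cG' = \cG$) --- no snake lemma is needed --- but your closing claim that ``the general case follows from the reductions of Subsection \ref{subsec:red}'' is unfounded: those reductions concern homogeneity, not the identification of $\Aut^G(Y)_{\ant}$, and no reduction can force $H' = H$ when $G_{\ant} \not\subset H$. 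Note that the paper itself does not attempt this recovery statement; its proof of the classification clause consists solely of the uniqueness of the stabilizer up to conjugacy.
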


\begin{proof}
Assume that $\pi$ is homogeneous and consider the action of $G \times \cG$
on $X$. This action is transitive, since $G$ acts transitively on the fibers
of $\pi$ while $\cG$ acts transitively on $A$ (Proposition \ref{prop:aa}). 
Thus, $X \cong (G \times \cG)/H$ as $G \times \cG$-varieties, where $H \subset G \times \cG$ 
denotes the isotropy subgroup scheme of some point $x \in X$; this
isomorphism maps $x$ to the base point $(e_G,e_{\cG})H$ of 
the homogeneous space $(G \times \cG)/H$.
Denote by $p: H \to G$ and $q: H \to \cG$ the projections. Then the (scheme-theoretic)
kernel of $q$ is identified with a subgroup scheme of $G$ that fixes $x$. 
Since $\pi$ is a $G$-bundle, it follows that $\Ker(q)$ is trivial. Likewise,
the kernel of $p$ is identified with a subgroup scheme of $\cG$ that fixes $x$. 
Since $\cG$ centralizes $G \times \cG$, it follows that $\Ker(p)$ acts trivially 
on $X$, and hence is trivial as $\cG$ acts faithfully. This yields the assertions of (ii).

Conversely, if (ii) holds, then $\cG$ acts on $X$ by $G$-automorphisms that lift 
the translations of $A$. Thus, $\pi$ is homogeneous.

This proves the equivalence of (i) and (ii). The final assertion follows from the
fact that the isotropy subgroup scheme $H = (G \times \cG)_x$ is uniquely determined
up to conjugacy in $G \times \cG$ and hence in $G$.
\end{proof}

\begin{corollary}\label{cor:str}
Given an abelian variety $A$ and a connected commutative algebraic group $G$, the isomorphism 
classes of homogeneous $G$-bundles over $A$ correspond bijectively to the isomorphism classes 
of extensions of connected algebraic groups 
\begin{equation}\label{eqn:com}
1 \longrightarrow G \longrightarrow X \longrightarrow A \longrightarrow 1.
\end{equation}
Moreover, any such extension is commutative.
\end{corollary}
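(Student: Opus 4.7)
The plan is to derive Corollary \ref{cor:str} from Theorem \ref{thm:str}, supplemented by two applications of Mumford's rigidity lemma to establish the commutativity of the extension.

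In one direction, a commutative extension $1 \to G \to X \to A \to 1$ gives a homogeneous $G$-bundle $\pi : X \to A$: for any $a \in A$ and any lift $x_a \in \pi^{-1}(a)$, translation by $x_a$ in the commutative group $X$ is a $G$-equivariant morphism covering $\tau_a$, so $\pi$ is homogeneous.

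Conversely, given a homogeneous $G$-bundle $\pi : X \to A$, I would invoke Theorem \ref{thm:str} to write $X \cong G \times^H \cG$ for some anti-affine extension $0 \to H \to \cG \to A \to 0$, where $H \subset G$ is automatically commutative since $G$ is. Because both $G$ and $\cG$ are commutative, the associated bundle $G \times^H \cG$ coincides canonically with the pushout $(G \oplus \cG)/H$ in the category of commutative algebraic groups, with $H$ embedded antidiagonally via its two inclusions into $G$ and $\cG$. This endows $X$ with the structure of a connected commutative algebraic group; the natural projection $X \to A$ induced by $\cG \to A$ has kernel $G$, yielding the desired extension. The bijection at the level of isomorphism classes follows from the canonicity of $\cG = \Aut^G(X)_{\ant}$ in Theorem \ref{thm:str}: two commutative extensions that are isomorphic as $G$-bundles correspond to equivalent pairs $(H, \cG)$, and are thus isomorphic as extensions.

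For the final assertion, I would show any extension $1 \to G \to X \to A \to 1$ of connected algebraic groups (with $G$ commutative) is commutative by applying Mumford's rigidity lemma \cite[p.~43]{Mum70} twice. Since $G$ is commutative, the conjugation action of $X$ on the normal subgroup $G$ factors through $A$, giving a morphism $c : A \times G \to G$. Setting $\tilde c(a, g) := c(a,g)\, g^{-1}$, one has $\tilde c$ vanishing on $A \times \{0_G\}$ and on $\{0_A\} \times G$. Since $A$ is complete and connected, rigidity implies $\tilde c$ factors through the projection to $G$; combined with $\tilde c|_{\{0_A\} \times G} \equiv 0$, this forces $\tilde c \equiv 0$, so $G$ is central in $X$. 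The commutator pairing $[\cdot, \cdot] : X \times X \to G$ then descends to a morphism $A \times A \to G$ vanishing on both coordinate slices; a second application of rigidity shows it is identically zero, so $X$ is commutative.

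The step I expect to be most delicate is this commutativity argument, particularly the correct setup of the two rigidity applications and the verification that the conjugation and commutator morphisms descend well-definedly through $A$ once centrality is established.
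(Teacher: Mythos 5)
Most of your proposal tracks the paper: both you and the paper endow the bundle $X \cong G \times^H \cG$ of Theorem \ref{thm:str} with the (pushout) structure of a commutative algebraic group so that $\pi$ becomes a homomorphism with kernel $G$, and both get the easy converse from translations. Your commutativity argument is a genuinely different and perfectly valid route: the paper instead writes $X = X_{\aff}X_{\ant}$, notes that $X_{\aff}$ maps trivially to $A$ and hence lies in $G$ (so is commutative), and uses that $X_{\ant}$ is central; your double application of the rigidity lemma is the classical argument and is more self-contained, at the price of redoing by hand what the structure theory gives for free.

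The gap is in the bijection of isomorphism classes. Your injectivity argument shows only that two extensions $X_1$, $X_2$ that are isomorphic as $G$-bundles have equivalent pairs $(H_i,\cG_i)$, hence isomorphic \emph{pushout} extensions $G \times^{H_i}\cG_i$. To conclude $X_1 \cong X_2$ as extensions you still need that each $X_i$, with its \emph{original} group law, is isomorphic as an extension to the pushout built from its pair; equivalently, that the commutative group structure on a given homogeneous $G$-bundle (with $G$ acting as the kernel) is unique up to a translation in $G$. Your phrase ``and are thus isomorphic as extensions'' silently assumes this compatibility, which is exactly the point the paper's proof addresses: it asserts uniqueness of the group structure up to the choice of neutral element in $\pi^{-1}(0_A)$, and compares the bundle automorphism group $\Aut^G_A(X) \cong \Hom(A,G) \cong G \ltimes \Hom_{\gp}(A,G)$ with the automorphism group $\Hom_{\gp}(A,G)$ of the extension, so that bundle isomorphisms and extension isomorphisms differ precisely by $G$-translations. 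The missing step is true and fixable, for instance in your own style: given a $G$-bundle isomorphism $\phi : X_1 \to X_2$ between two extensions, compose with a translation of $X_2$ so that $\phi(e_{X_1}) = e_{X_2}$; then the defect $(x,y) \mapsto \phi(x+y) - \phi(x) - \phi(y)$ lands in $G$, is invariant under the $G$-actions on both variables, hence descends to a morphism $A \times A \to G$ vanishing on the two coordinate slices, and a third application of rigidity shows it vanishes, so $\phi$ is an isomorphism of extensions. (Alternatively, one can check that for a commutative extension $X$ the group $\cG = \Aut^G(X)_{\ant}$ consists of translations of $X$, because $\Hom_{\gp}(A,G)$ is discrete, so the pushout law coincides with the original one.) As written, however, the classification assertion of the corollary is not fully proved.
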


\begin{proof}
For any homogeneous $G$-bundle (\ref{eqn:prin}), the variety $X := (G \times \cG)/H$ 
has a structure of commutative algebraic group such that the projection $\pi: X \to A$
is a homomorphism with kernel $G$. This group structure is unique up to the choice of the 
neutral element in the fiber of $\pi$ at $0_A$, i.e., up to a translation in $G$.

Conversely, any extension (\ref{eqn:com}) obviously yields a homogeneous 
$G$-bundle. Moreover, $X = X_{\aff} X_{\ant}$, and $X_{\aff} \subset G$ is commutative. 
Since $X_{\ant}$ is central in $X$, it follows that $X$ is commutative.

Also, note that the automorphism group of a $G$-bundle $\pi: X \to A$ is
$$
\Aut^G_A(X) \cong \Hom^G(X,G) = \Hom(A,G) 
$$
as seen in Remark \ref{rem:red}, and the automorphism group of an extension (\ref{eqn:com}) is
$\Hom_{\gp}(A,G)$. Moreover, $\Hom(A,G)$ is the semi-direct product of $\Hom_{\gp}(A,G)$ with 
the normal subgroup $G$ of translations. Putting these facts together yields our assertion.
\end{proof}

\begin{remarks}\label{rem:hbhs}
(i) If $k$ is the algebraic closure of a finite field, then every anti-affine group 
is an abelian variety (see \cite[Proposition 2.2]{Br09}). Thus, $\pi_* : \cG \to A$ 
is an isogeny for any homogeneous $G$-bundle (\ref{eqn:prin}). In other words, 
the group scheme $H$ is finite. This does not extend to any other algebraically closed 
field, in view of \cite[Example 3.11]{Br09}.

\smallskip

\noindent
(ii) With the notation of Theorem \ref{thm:str}, the subgroup scheme $H$ always contains
$G_{\ant}$, since by Remark \ref{rem:red},
$$
G_{\ant} = \Aut^G_A(X)_{\ant} \subset \Aut^G(X)_{\ant} \cap \Aut_A(X) = \Ker(\pi_*: \cG \to A).
$$
Moreover, we have for the trivial bundle: $H = G_{\ant}$ and $\cG = G_{\ant} \times A$. 
So Theorem \ref{thm:str} (stated for an arbitrary algebraic group $G$) yields a
``natural'' classification if and only if $G$ is \emph{affine}; then the trivial bundle does
correspond to the trivial subgroup scheme $H$. 

\smallskip

\noindent
(iii) With the notation of Theorem \ref{thm:str} again, the projection 
$G \times^H \cG \to G/H$ yields a $G$-equivariant morphism
$$
\varphi : X \longrightarrow G/H
$$
which is a principal bundle under the anti-affine group $\cG$. We may view $\varphi$ 
as a reduction of structure group to the $H$-bundle $\pi_* : \cG \to A$.
If $G$ is affine, then the variety $G/H$ is quasi-affine, as follows from
\cite[Corollary 1.2.9]{Gr97} together with Lemma \ref{lem:obs}. Moreover, the morphism 
$\varphi^{\#}: \cO(G/H) \to \cO(X)$ 
is an isomorphism, since $\cG$ is anti-affine. Thus, $\varphi$ may be viewed as the canonical 
morphism $X \to \Spec \, \cO(X)$ and hence depends only on the abstract variety $X$. 

If in addition $G$ is connected, then $\pi : X \to A$ is the Albanese morphism 
(the universal morphism to an abelian variety). Thus, $\pi$ also depends only 
on the abstract variety $X$. 
\end{remarks}

In fact, the above reduction of structure group is minimal in the following sense: 

\begin{proposition}\label{prop:min}
Let $\pi: X \to A$ be a homogeneous principal bundle under an affine algebraic 
group $G$ and let $H \subset G$ be a subgroup scheme as in Theorem \ref{thm:str}.
Then $\pi$ admits a reduction to an observable subgroup scheme $K \subset G$
if and only if $K$ contains a conjugate of $H$ in $G$. 
\end{proposition}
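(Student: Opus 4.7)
My plan is to derive both implications from the explicit description $X \cong G \times^H \cG$ furnished by Theorem~\ref{thm:str}, in combination with the anti-affine/quasi-affine dichotomy recorded in Lemma~\ref{lem:inv}.

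For the sufficiency, suppose $K \supset g_0 H g_0^{-1}$ for some $g_0 \in G$. Since replacing $H$ by a conjugate merely changes the base point of the homogeneous space $(G \times \cG)/H$ and yields the same $G$-bundle up to isomorphism (as is built into the equivalence relation of Theorem~\ref{thm:str}), I may assume outright that $H \subset K$. Inside $X = G \times^H \cG$, the closed subscheme $Y := K \times^H \cG$ is then $K$-stable, and the natural map $G \times^K Y \to G \times^H \cG$ is an isomorphism by associativity of the balanced product. Hence $Y \hookrightarrow X$ is the sought reduction of structure group to $K$.

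For the converse, assume that $\pi$ admits a reduction to $K$, equivalently a $G$-equivariant morphism $f : X \to G/K$. Observability of $K$ in $G$ means by definition that $G/K$ is quasi-affine. Since $\cG \subset \Aut^G(X) \subset \Aut(X)$ is anti-affine, it is contained in the largest anti-affine group of automorphisms of $X$, so Lemma~\ref{lem:inv} forces $f$ to be $\cG$-invariant. In the model $X \cong G \times^H \cG$, the group $\cG$ acts freely by translation on the second factor with quotient $G/H$; consequently $f$ descends to a $G$-equivariant morphism $\bar{f} : G/H \to G/K$. Writing $\bar{f}(e_G H) = g_0 K$, equivariance of $\bar{f}$ gives $H \cdot g_0 K = g_0 K$, i.e., $g_0^{-1} H g_0 \subset K$, exhibiting a conjugate of $H$ inside $K$.

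The one subtlety I anticipate is at the midpoint of the converse: one must verify that the $\cG$-action on $X$, defined a priori through $\Aut^G(X)$, qualifies for the application of Lemma~\ref{lem:inv}, which is formulated in terms of the largest anti-affine group of automorphisms of $X$ as a bare variety. This is automatic, since any anti-affine subgroup of $\Aut(X)$ is contained in that largest one. Once this is dispatched, the remaining ingredients, namely the identification $X/\cG = G/H$ and the classification of $G$-equivariant morphisms between homogeneous spaces, are standard formal manipulations.
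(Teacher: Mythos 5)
Your proposal is correct and follows essentially the same route as the paper: the nontrivial direction hinges on applying Lemma \ref{lem:inv} to the reduction map $f : X \to G/K$ (using quasi-affineness of $G/K$ from observability) to get $\cG$-invariance, and then equivariance forces $H$ to fix the image point, so $H$ lies in a conjugate of $K$; whether one descends $f$ through the $\cG$-torsor $X \to G/H$ as you do, or evaluates at the base point of $(G\times\cG)/H$ as the paper does, is only a cosmetic difference. Your explicit balanced-product argument $G \times^K (K \times^H \cG) \cong G \times^H \cG$ just spells out the direction the paper dismisses as obvious.
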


\begin{proof}
Consider a reduction of structure group $q : X \to G/K$. If $K$ is observable in $G$, 
then $q$ is $\cG$-invariant by Lemma \ref{lem:inv} and hence is $(G \times \cG)$-equivariant 
for the trivial action of $\cG$ on $G/K$. Thus, $q$ maps the base point of 
$X = (G\times \cG)/H$ to an $H$-fixed point. In other words, $H$ is contained in 
a conjugate of $K$. The converse implication is obvious.
\end{proof}

In the above statement, the assumption that $K$ is observable cannot be omitted,
as shown by Example \ref{ex:eagst} (ii).

\subsection{Equivariant automorphism groups}
\label{subsec:eagst}

The structure theorem \ref{thm:str} yields a further characterization of homogeneity:

\begin{proposition}\label{prop:com}
The following conditions are equivalent for a principal bundle $\pi: X \to A$
under an affine algebraic group $G$:

\smallskip

\noindent
{\rm (i)} $\pi$ is homogeneous.

\smallskip

\noindent
{\rm (ii)} $\pi$ is trivialized by an anti-affine extension $\varphi : \cG \to A$.
\end{proposition}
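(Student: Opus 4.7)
For (i) $\Rightarrow$ (ii), the plan is to simply invoke Theorem \ref{thm:str}. Applying it yields an isomorphism $X \cong G \times^H \cG$ with $\cG$ anti-affine fitting into an extension $0 \to H \to \cG \to A \to 0$. Taking $\varphi = \pi_*: \cG \to A$, the morphism $\cG \to X$, $g' \mapsto [e_G, g']$, lifts $\varphi$, hence gives a section of the pulled-back bundle $\varphi^*\pi: \cG \times_A X \to \cG$ and therefore a $G$-equivariant trivialization.

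For (ii) $\Rightarrow$ (i), the plan is to build lifts of translations directly from the trivialization. A trivialization of $\varphi^*\pi$ furnishes a morphism $s : \cG \to X$ with $\pi \circ s = \varphi$. For any $g'_0 \in \cG$ I would like to define $\widetilde{\tau}_{g'_0} \in \Aut^G(X)$ by the formula $\widetilde{\tau}_{g'_0}(g \cdot s(g')) := g \cdot s(g'_0 g')$; once this is well-defined it is visibly $G$-equivariant and lifts $\tau_{\varphi(g'_0)}$, and since $\varphi$ surjects onto $A$ we get homogeneity.

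The main obstacle is well-definedness of the formula. If $g\cdot s(g') = g_1 \cdot s(g'_1)$ then $g'_1 = h g'$ for some $h \in H := \ker(\varphi)$, and the two sections $s(hg')$ and $s(g')$ lie in the same $G$-orbit, so $s(hg') = \beta(h, g') \cdot s(g')$ for a unique morphism $\beta : H \times \cG \to G$. Here the key input is Lemma \ref{lem:inv} (or just the definition of anti-affineness): since $\cG$ is anti-affine and $G$ is affine, each morphism $\beta(h,-) : \cG \to G$ must be constant, so $\beta(h,g')=\beta(h)$, and the relation $s(h_1 h_2 g') = \beta(h_1)\beta(h_2) s(g')$ forces $\beta : H \to G$ to be a group homomorphism. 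Then, using that the anti-affine group $\cG$ is commutative, one computes $s(g'_0 h g') = s(h g'_0 g') = \beta(h)\cdot s(g'_0 g')$, and $g_1 = g\beta(h)^{-1}$, so
\[
g_1 \cdot s(g'_0 g'_1) \;=\; g\beta(h)^{-1} \cdot \beta(h) s(g'_0 g') \;=\; g \cdot s(g'_0 g'),
\]
establishing well-definedness.

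With that in hand the assignment $g'_0 \mapsto \widetilde{\tau}_{g'_0}$ provides $G$-equivariant lifts of all translations in the image of $\varphi = \pi_*$, which is all of $A$, so $\pi$ is homogeneous. The only nontrivial ingredients are the vanishing of nonconstant morphisms from $\cG$ to $G$ and the commutativity of $\cG$, both of which are standard consequences of anti-affineness already used throughout the paper.
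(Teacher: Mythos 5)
Your proposal is correct and follows essentially the same route as the paper: (i)$\Rightarrow$(ii) is the cartesian square coming from Theorem \ref{thm:str}, and (ii)$\Rightarrow$(i) rests on the same key point that a morphism $H \times \cG \to G$ with $\cG$ anti-affine and $G$ affine factors through a homomorphism $H \to G$ (the paper phrases this as descending the $\Ker(\varphi)$-action on $G \times \cG$ to get $X \cong G \times^H \cG$, you phrase it as well-definedness of explicit lifts $\widetilde{\tau}_{g'_0}$). The only refinement to make is to run your well-definedness check with $S$-points and conclude by fppf descent along the $H$-torsor $G \times \cG \to X$, since $H = \Ker(\varphi)$ need not be reduced; your computation with $\beta$ goes through verbatim in that setting.
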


\begin{proof}
(i)$\Rightarrow$(ii) Theorem \ref{thm:str} yields a cartesian square
$$
\CD
G \times \cG @>{p}>> \cG \\
@V{q}VV @V{\pi_*}VV \\
X @>{\pi}>> A, \\
\endCD
$$
where $p$ denotes the second projection, and $q$ the quotient morphism by $H$.

(ii)$\Rightarrow$(i) By assumption, we have a similar cartesian square with $\pi_*$
replaced by $\varphi$. Thus, the group scheme $H := \Ker(\varphi)$ acts on 
$G \times \cG$ by $G$-equivariant automorphisms that lift the $H$-action on $\cG$ by
multiplication: this action must be of the form
$$
h \cdot (g,\gamma) = \big( g f(h,\gamma), h \gamma \big)
$$
for some morphism $f: H \times \cG \to G$. Since $\cG$ is anti-affine and $G$ is affine,
$f$ factors through a morphism $\psi : H \to G$. Moreover, $\psi$ is a homomorphism of
group schemes, since $H$ acts on $G \times \cG$. Thus, $X \cong G \times^H \cG$ 
is homogeneous.
\end{proof}

\begin{remark}\label{rem:eagst}
In particular, a $G$-bundle is homogeneous if it is trivialized by an isogeny,
or equivalently, by the multiplication map $n_A : A \to A$ for some positive
integer $n$. 

Also, given a homogeneous bundle $X \cong G \times^H \cG$, one easily checks that
the pull-back $n_A^*(X)$ is the $G$-bundle $G \times^{H/H_n} \cG/H_n$, where
$H_n \subset H$ denotes the kernel of $n_H$, and $H/H_n$ is viewed as a subgroup 
of $G$ via the isomorphism $n_H: H/H_n \to H$. In particular, 
\emph{$X$ is trivialized by $n_A$ if and only if $H$ is killed by $n$}.     
\end{remark}

Theorem \ref{thm:str} also yields a description of bundle automorphisms:

\begin{proposition}\label{prop:aut}
Consider a homogeneous bundle $\pi: X \to A$ under an affine algebraic group $G$.
Write $X = G \times^H \cG$ as in Theorem \ref{thm:str} and denote by $C_G(H)$ 
the centralizer of $H$ in $G$. Then the group scheme $C_G(H)$ acts on $X$ by bundle 
automorphisms, via $z \cdot ( g, \gamma ) H := ( gz^{-1},\gamma )H$. 
Moreover, this action yields an isomorphism
\begin{equation}\label{eqn:cent}
\CD
F : C_G(H) @>{\cong}>> \Aut^G_A(X).
\endCD
\end{equation}
As a consequence, the group scheme $\pi_*^{-1}(A)$ (consisting of those $G$-automorphisms 
of $X$ that lift the translations of $A$) is isomorphic to 
$C_G(H) \times ^H \cG = \big( C_G(H) \times \cG \big)/H$.
\end{proposition}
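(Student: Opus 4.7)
\emph{Plan.} I will prove the proposition in three steps: well-definedness of $F$, bijectivity via the dictionary $\Aut^G_A(X)\cong\Hom^G(X,G)$, and finally assembly of $\pi_*^{-1}(A)$ from $C_G(H)$ and $\cG$.

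The first step is a direct verification. The formula $(g,\gamma)\mapsto (gz^{-1},\gamma)$ descends to $G\times^H\cG$ exactly when it respects the relation $(g,\gamma)\sim(gh^{-1},h\gamma)$ for $h\in H$; a short calculation shows this amounts to $zh=hz$ for every $h\in H$, precisely the condition $z\in C_G(H)$. Commutativity with the $G$-action (left multiplication on the first factor obviously commutes with right multiplication by $z^{-1}$) and preservation of $\pi$ (the $\cG$-coordinate is untouched) are immediate, so $F(z)$ lies in $\Aut^G_A(X)$, and $F$ is clearly a homomorphism of group schemes.

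The heart of the proof is the bijectivity of $F$, for which I use the isomorphism $\Aut^G_A(X)\cong\Hom^G(X,G)$ (with $G$ acting on itself by conjugation) recalled in the excerpt. A $G$-equivariant morphism $u\colon X\to G$ lifts to an $H$-invariant morphism $\tilde u\colon G\times\cG\to G$; $G$-equivariance forces $\tilde u(g,\gamma)=g\,v(\gamma)\,g^{-1}$ for the single morphism $v\colon\cG\to G$ defined by $v(\gamma):=\tilde u(e_G,\gamma)$, and the $H$-invariance of $\tilde u$ translates to $v(h\gamma)=h\,v(\gamma)\,h^{-1}$ for all $h\in H$. The crux now enters: since $\cG$ is anti-affine and $G$ is affine, any morphism $\cG\to G$ is constant, so $v\equiv g_0$ for some $g_0\in G$, and the equivariance collapses to $g_0\in C_G(H)$. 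Unwinding the identifications shows this $g_0$ corresponds to $F(g_0^{-1})$, so $F$ is an isomorphism. The main obstacle here is organizing the two lifts and their compatibilities cleanly; the substantive input is the anti-affine-to-affine rigidity, reused from the style of argument in Remark \ref{rem:red}.

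For the concluding statement, I assemble the map $\Psi\colon C_G(H)\times\cG\to\pi_*^{-1}(A)$, $\Psi(z,\gamma):=F(z)\circ\gamma$, where $\gamma$ acts on $X$ via $\cG\subset\Aut^G(X)$. A direct computation from the explicit formulas identifies $\ker(\Psi)$ with a copy of $H$ sitting inside $C_G(H)\times\cG$ via $h\mapsto(h,h)$; this uses $H\subset C_G(H)$ (valid because $H$ is commutative) and $H\subset\cG$ (from Theorem \ref{thm:str}). Since $H$ is commutative, it is central in $C_G(H)\times\cG$, so the quotient $C_G(H)\times^H\cG=(C_G(H)\times\cG)/H$ inherits a group scheme structure and $\Psi$ factors to an injective homomorphism. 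Surjectivity onto $\pi_*^{-1}(A)$ follows from homogeneity: by Proposition \ref{prop:aa}, $\pi_*(\cG)=A$, so any $\varphi\in\pi_*^{-1}(A)$ can be multiplied by a suitable $\gamma^{-1}\in\cG$ to land in $\Aut^G_A(X)=C_G(H)$, yielding the required decomposition.
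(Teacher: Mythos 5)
Your argument follows the same route as the paper's own proof: the dictionary $\Aut^G_A(X)\cong\Hom^G(X,G)$ (conjugation action), the lift to an $H$-invariant map $G\times\cG\to G$ of the form $g\,v(\gamma)\,g^{-1}$, the rigidity ``any morphism from the anti-affine $\cG$ to the affine $G$ is constant'', and the assembly of $\pi_*^{-1}(A)$ as the product of $C_G(H)\cong\Aut^G_A(X)$ with $\cG$, the intersection being $H=\Ker(\pi_*\vert_\cG)$. Your kernel $\{(h,h)\}$ versus the paper's twisted quotient $C_G(H)\times^H\cG$ is only a matter of convention: the two quotients are identified by the automorphism $(z,\gamma)\mapsto(z,\gamma^{-1})$ of $C_G(H)\times\cG$, so this is harmless.

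The one genuine gap is that you run the entire bijectivity argument on closed points, while the statement (\ref{eqn:cent}) is an isomorphism of \emph{group schemes}, and in this situation these schemes are genuinely non-reduced in general: in Examples \ref{ex:eagst} (iii) one has $C_G(H)\cong\mu_2\times\bG_a$ and $\Aut^G_A(X)$ non-reduced, and the subgroup scheme $H$ of Theorem \ref{thm:str} may itself be infinitesimal. A homomorphism that is bijective on $k$-points need not be an isomorphism (think of Frobenius), and your step ``$v(h\gamma)=h\,v(\gamma)\,h^{-1}$ for all $h$ forces $g_0\in C_G(H)$'', if read only on points of $H$, a priori yields only $g_0\in C_G(H_{\red})$, which can be strictly larger than the scheme-theoretic centralizer. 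Likewise ``$F$ is clearly a homomorphism of group schemes'' requires $F$ to be defined on $S$-points, not just on elements. This is exactly why the paper carries out your computation functorially: for every scheme $S$, $\Aut^G_A(X)(S)\cong\Hom^H(\cG\times S,G)$, and a morphism $\cG\times S\to G$ factors through $S$ because $G$ is affine and $\cO(\cG\times S)=\cO(S)$, producing a morphism $S\to C_G(H)$; tracing through gives the inverse of $F$ as a map of functors. Your argument transcribes verbatim into this language (and the same caveat applies to the final step, where $\pi_*^{-1}(A)=C_G(H)\,\cG$ should be read as an equality of subgroup schemes rather than of sets of points), so the proof is repairable — but as written, bijectivity of $F$ on $k$-points does not establish the claimed isomorphism of group schemes.
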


\begin{proof}
Given a scheme $S$ and a morphism $\psi : S \to C_G(H)$, we obtain a morphism
$$
G \times \cG \times S \to G \times \cG \times S, 
\quad (g, \gamma, s) \mapsto \big( g \psi(s)^{-1}, \gamma,s)
$$
which is $G \times H$-equivariant and hence descends to a $G$-equivariant morphism
$F(\psi) : X \times S \to X \times S$. Clearly, $F(\psi)$ is compatible with 
$\pi \times \id : X \times S \to A \times S$; in other words, 
$F(\psi) \in \Aut_A^G(X)(S)$. This yields a homomorphism of group functors 
$F : C_G(H) \to \Aut_A(X)$.
 
To show that $F$ is an isomorphism, note that 
$$
\Aut^G_A(X)(S) = \Aut^G_{A \times S}(X \times S) \cong \Hom^G(G \times^H \cG \times S,G)
\cong \Hom^H(\cG \times S,G)
$$
where $G$, and hence $H$, acts trivially on $S$. Now each $f \in \Hom(\cG \times S,G)$
factors through a morphism $\varphi : S \to G$, as in the proof of Proposition \ref{prop:com}. 
Moreover, $f$ is $H$-equivariant if and only if $\varphi$ is $H$-equivariant, 
i.e., $\varphi$ factors through a morphism $\psi: S \to C_G(H)$. Tracing through the 
above isomorphisms, one checks that $F$ and the assignement $f \mapsto \psi$ are mutually 
inverse.

For the final assertion, note that $\pi_*^{-1}(A) =  \Aut_A^G(X) \, \cG = C_G(H) \, \cG$. 
Moreover, the scheme-theoretic intersection $\Aut_A^G(X) \cap \cG$ is the kernel of 
$\pi_* : \cG \to A$, that is, $H$.
\end{proof}

\begin{examples}\label{ex:eagst}
(i) If $G$ is the general linear group $\GL(V)$, then $C_G(H)$ is the group of invertible elements
of a finite-dimensional associative algebra: the centralizer of $H$ in $\End(V)$. In particular, 
$C_G(H)$ is a connected algebraic group, and hence $\Aut^G(X)$ is an algebraic group (locally of
finite type). This also follows from the well-known structure of automorphisms of vector bundles, 
in view of Lemma \ref{lem:homvec}. 

\smallskip

\noindent
{\rm (ii)} If $G = \SO_3$ is the special orthogonal group associated to the quadratic form 
$x^2 + y^2 + z^2$ in characteristic $\neq 2$, then the subgroup $H \subset G$ consisting 
of diagonal matrices is isomorphic to $\bZ/2\bZ \times \bZ/2\bZ$ and satisfies $H = C_G(H)$. 
Let $\cG$ be an elliptic curve; then the kernel of the multiplication map $2_{\cG}$ is isomorphic
to $H$. Then $X := G \times^H \cG$ is a homogeneous $G$-bundle over the
elliptic curve $A := \cG/H$, and $\Aut^G_A(X)\cong H$ is not connected. 

In fact, $\Aut^G(X)$ is not connected either: since any $h \in H$ satisfies $h = h^{-1}$, 
the $G$-automorphism of $G \times \cG$ given by $(g,\gamma) \mapsto (g, -\gamma)$ descends 
to a $G$-automorphism of $X$ that lifts $- \id_A \in \Aut_{\gp}(A)$.

Next, we claim that $\pi$ admits a reduction of structure group to a principal
bundle under a Borel subgroup $B \subset G$, but no such reduction is homogeneous.

Indeed, $\pi$ is locally trivial for the Zariski topology in view of \cite[Section 5.6]{Se58}.
Equivalently, there exists a $H$-equivariant rational map $\sigma : \cG - \to G$. 
Composing with the projection $G \to G/B$ yields an $H$-equivariant rational map 
$\tau : \cG - \to G/B$ which is in fact regular, since $\cG$ is a smooth complete curve. 
This yields in turn a $G$-equivariant morphism $f : X= G \times^H \cG \to G/B$, i.e., 
a reduction of structure group to $B$.

Assume that such a reduction is homogeneous. Then we have a $G$-isomorphism
$X \cong G \times^B X'$ where $X' \subset X$ is a closed $B$-stable subvariety, and
$\pi: X' \to A$ is a $B$-bundle; moreover, 
$Y \cong B \times^{H'} \cG'$ where $\cG'$ is an anti-affine extension of $A$ by $H'$, 
and $H' \subset B$ is a commutative subgroup. Therefore, $X \cong G \times^{H'} \cG'$;
moreover, the resulting projection $\varphi' : X \to G/H'$ is the canonical morphism
$X \to \Spec \, \cO(X)$, by Remark \ref{rem:hbhs} (iii). In particular, the $G$-equivariant morphism 
$\varphi'$ is the same as $\varphi : X \to G/H$. Thus, $H$ is conjugate to a subgroup of $B$. 
As a consequence, $H$ stabilizes a line in $k^3$ that is isotropic for the quadratic form 
$x^2 + y^2 + z^2$. But this contradicts the fact that the $H$-stable lines are exactly 
the coordinate lines.

\smallskip

\noindent
{\rm (iii)} If $G = \SL_2$ in characteristic $2$ and $H$ is the subgroup generated 
by a non-trivial unipotent element, then $H \cong \bZ/2\bZ$ and 
$C_G(H) \cong \mu_2 \times \bG_a$, where $\mu_2$ is the scheme-theoretic center of $\SL_2$, 
and $\bG_a$ the subgroup of upper triangular unipotent matrices.
In particular, $H$ is reduced whereas $C_G(H)$ is not. Let $\cG$ be an ordinary elliptic curve,
so that $H$ is the set-theoretic kernel of the multiplication map $2_{\cG}$. Then again, 
$X := G \times^H \cG$ is a homogeneous bundle over the elliptic curve $A := \cG/H$; but
now $\Aut^G_A(X)$ is not reduced, nor is $\Aut^G(X)$.
\end{examples}

\subsection{Anti-affine extensions}
\label{subsec:aae}

By Theorem \ref{thm:str}, the classification of homogeneous $G$-bundles over $A$ 
reduces to those of commutative subgroup schemes $H \subset G$ up to conjugacy, 
and of anti-affine extensions $\cG$ of $A$ by $H$. We now determine these extensions
under the assumption that $H$ is affine; this holds of course if $G$ is affine. 
For this, we adapt the arguments of \cite[Section 2]{Br09}, which treats the case that 
$H$ is a connected (affine) algebraic group over an arbitrary field.
 
Given an abelian variety $A$ and a commutative affine group scheme $H$, consider an extension 
(\ref{eqn:ext}) where $\cG$ is a connected commutative algebraic group. We have
$$
H = H_u \times H_s
$$
where $H_u$ (resp. $H_s$) denotes the largest unipotent (resp. diagonalizable) subgroup
scheme of $H$. Thus, setting $\cG_s := \cG/H_u$ and $\cG_u := \cG/H_s$, we obtain extensions
\begin{equation}\label{eqn:exts}
\CD
0 @>>> H_s @>>> \cG_s @>{\pi_s}>> A @>>> 0,
\endCD
\end{equation}
\begin{equation}\label{eqn:extu}
\CD
0 @>>> H_u @>>> \cG_u @>{\pi_u}>> A @>>> 0,
\endCD
\end{equation}
and an isomorphism 
$$
\cG \cong \cG_u \times_A \cG_s
$$ 
arising from the natural homomorphisms $\cG \to \cG_u$, $\cG \to \cG_s$ over $A$.  
We may now state the following result, that generalizes \cite[Proposition 2.5]{Br09} 
with a simpler proof:

\begin{lemma}\label{lem:prod}
With the above notation, $\cG$ is anti-affine if and only if so are $\cG_u$ and $\cG_s$.
\end{lemma}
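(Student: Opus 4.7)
The plan is to reduce computing $\cO(\cG)$ to the known anti-affineness of the quotients $\cG_u$ and $\cG_s$, by exploiting the fiber-product description $\cG \cong \cG_u \times_A \cG_s$ together with the splitting $H = H_u \times H_s$. The forward implication is immediate: if $\cG$ is anti-affine, then any quotient of $\cG$ is again anti-affine, since regular functions pull back injectively along a surjection. Applied to the quotient maps $\cG \to \cG_u$ and $\cG \to \cG_s$, this gives what we need.

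For the converse, assume $\cG_u$ and $\cG_s$ are anti-affine, and let $g: \cG \to \cG_u$ denote the first projection. Since $\pi_s: \cG_s \to A$ is affine (being an $H_s$-torsor with $H_s$ affine), base change along $\pi_u$ gives $g_*\cO_\cG \cong \pi_u^*(\pi_s)_*\cO_{\cG_s}$, so
\[
\cO(\cG) \;=\; H^0\bigl(\cG_u,\; \pi_u^*(\pi_s)_*\cO_{\cG_s}\bigr).
\]
I would then exploit the two group structures separately. Since $H_s$ is diagonalizable, the canonical $H_s$-action on $(\pi_s)_*\cO_{\cG_s}$ produces a character decomposition $(\pi_s)_*\cO_{\cG_s} = \bigoplus_{\chi \in \widehat{H_s}} L_\chi$, where each $L_\chi$ is a line bundle on $A$ with $L_0 = \cO_A$; the hypothesis that $\cG_s$ is anti-affine translates into $H^0(A, L_\chi)=0$ for every non-trivial $\chi$. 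Since $\pi_u$ is an $H_u$-torsor and $H_u$ is unipotent, the sheaf $(\pi_u)_*\cO_{\cG_u}$ is the twisted form of $\cO_A\otimes\cO(H_u)$ attached to the torsor, hence the directed union of finite-rank $\cO_A$-submodules each admitting a filtration with successive quotients isomorphic to $\cO_A$ (a standard consequence of the fact that every finite-dimensional representation of a unipotent group scheme has a composition series with trivial one-dimensional factors).

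Combining these two ingredients via the projection formula yields
\[
H^0\bigl(\cG_u,\; \pi_u^* L_\chi\bigr) \;=\; H^0\bigl(A,\; L_\chi \otimes (\pi_u)_*\cO_{\cG_u}\bigr).
\]
For $\chi = 0$ this recovers $\cO(\cG_u) = k$ by anti-affineness of $\cG_u$. For $\chi \neq 0$, each successive quotient in the filtration of $L_\chi \otimes (\pi_u)_*\cO_{\cG_u}$ is isomorphic to $L_\chi$, which has no global sections; left exactness of $H^0$ on each finite step, combined with compatibility of $H^0$ with filtered colimits of quasi-coherent sheaves on the Noetherian scheme $A$, forces this group to vanish. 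Summing over $\chi$ then gives $\cO(\cG) = k$.

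The main obstacle is the unipotent structural input: one has to verify carefully that $\cO(H_u)$ is the ascending union of finite-dimensional $H_u$-subrepresentations whose composition factors are all trivial, and that this filtration transfers to a filtration of $(\pi_u)_*\cO_{\cG_u}$ by $\cO_A$-submodules of finite rank whose graded pieces are $\cO_A$. Once this structural fact is granted, the rest is a formal manipulation with the projection formula, left exactness of global sections, and the character decomposition for the diagonalizable part $H_s$.
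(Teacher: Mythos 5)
Your argument is correct, but it takes a genuinely different route from the paper's. The paper proves the nontrivial direction purely group-theoretically: it takes an arbitrary surjection $f:\cG \to G$ onto a connected commutative affine group, notes that $f(H_s)\subset G_s$ so that $f$ descends to a surjection $\cG_u \to G/G_s$, which must be trivial since $\cG_u$ is anti-affine and $G/G_s$ is affine, whence $G=G_s$; symmetrically $G=G_u$, so $G$ is both diagonalizable and unipotent, hence trivial, and therefore the affinization of $\cG$ is trivial. Your proof instead computes $\cO(\cG)$ directly on $A$: using $\cG\cong\cG_u\times_A\cG_s$, affine (or flat) base change, the weight decomposition $(\pi_s)_*\cO_{\cG_s}=\bigoplus_{\chi\in\widehat{H_s}}L_\chi$ with $H^0(A,L_\chi)=0$ for $\chi\neq 0$, the unipotent filtration of $(\pi_u)_*\cO_{\cG_u}$ with graded pieces $\cO_A$, and the projection formula. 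The structural input you flag is indeed the standard one (every finite-dimensional representation of a unipotent group scheme has a filtration with trivial quotients, $\cO(H_u)$ is locally finite, and the associated-sheaf construction for a torsor is exact and commutes with filtered colimits), and the colimit and direct-sum manipulations with $H^0$ are harmless on the Noetherian schemes involved, so there is no gap. What the two approaches buy: the paper's argument is much shorter, avoids all sheaf theory, and works verbatim because anti-affineness is detected by affine quotients; yours is heavier but more explicit, yielding the formula $\cO(\cG)\cong\bigoplus_\chi H^0\bigl(A, L_\chi\otimes(\pi_u)_*\cO_{\cG_u}\bigr)$ and bringing out the line bundles $L_\chi$ that reappear later in the classification of the diagonalizable case (Lemma \ref{lem:dia} and Theorem \ref{thm:cla}).
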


\begin{proof}
If $\cG$ is anti-affine, then so are its quotient groups $\cG_u$ and $\cG_s$. Conversely, assume
that $\cG_u$ and $\cG_s$ are anti-affine and consider a surjective homomorphism $f : \cG \to G$
where $G$ denotes a (connected, commutative) affine algebraic group. Then $f$ induces a surjective 
homomorphism $f_u : \cG_u = \cG/H_s \to G/G_s$ which must be trivial, since $\cG_u$ is anti-affine 
and $G/G_s$ is affine. Thus, $G = G_s$ is diagonalizable. Likewise, $G$ is unipotent, and hence is 
trivial. This shows that any affine quotient of $\cG$ is trivial, i.e., $\cG$ is anti-affine.
\end{proof}

By that lemma, we may treat the unipotent and diagonalizable cases separately; we begin with 
the former case. In characteristic $0$, the commutative unipotent group $H_u$ is a 
\emph{vector group}, that is, the additive group of a finite-dimensional vector space. 
Moreover, there is a universal extension of $A$ by a vector group,
$$
0 \longrightarrow H(A) \longrightarrow E(A) \longrightarrow A \longrightarrow 0,
$$
where we set
$$
H(A) := H^1(A,\cO_A)^*.
$$ 
Also, recall that $H^1(A,\cO_A)$ is the Lie algebra of the dual abelian variety 
$\widehat{A}$; in particular, $\dim H(A) = \dim(A)$.
Thus, the extension (\ref{eqn:extu}) is classified by a linear map 
$$
\gamma :  H(A) \longrightarrow H_u.
$$
By \cite[Proposition 2.3]{Br09}, $\cG_u$ is anti-affine if and only if $\gamma$ is surjective.

The situation in characteristic $p >0$ is quite different:

\begin{lemma}\label{lem:uni}
In positive characteristics, the extension (\ref{eqn:extu}) is anti-affine if and only if $\cG_u$ 
is an abelian variety. Equivalently, $H_u$ is finite and its Cartier dual $\widehat{H_u}$ 
is a local subgroup scheme of $\widehat{A}$, the kernel of the isogeny 
$\widehat{A} \to \widehat{\cG_u}$ dual of the isogeny $\pi_u : \cG_u \to A$.
\end{lemma}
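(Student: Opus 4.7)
The ``if'' direction is immediate: an abelian variety is complete, so $\cO(\cG_u) = k$, and thus $\cG_u$ is anti-affine.

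For the ``only if'' direction, my plan is to show that the largest connected smooth affine subgroup $(\cG_u)_{\aff}$ of $\cG_u$ (from the Chevalley decomposition) is trivial; this forces $\cG_u$ to coincide with its Albanese variety, hence to be an abelian variety. The key input, specific to positive characteristic, is the structural fact that the affine part of an anti-affine algebraic group must be a torus---unlike in characteristic $0$, smooth connected unipotent (vector) groups cannot occur there (cf.\ \cite[Proposition~2.2]{Br09} and the char~$p$ analysis given there).

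Granting this, $(\cG_u)_{\aff}$ is a torus, and since its image under $\pi_u$ must be a connected affine subgroup of the abelian variety $A$ and hence trivial, we get $(\cG_u)_{\aff} \subseteq H_u$. A torus inside the unipotent group $H_u$ is trivial, so $(\cG_u)_{\aff} = 0$. Hence $\cG_u$ is complete and therefore an abelian variety; consequently $\pi_u$ is an isogeny and $H_u$ is its finite kernel. The Cartier-dual reformulation is then routine: dualising the isogeny $\pi_u$ yields an isogeny $\widehat{A} \to \widehat{\cG_u}$ with kernel $\widehat{H_u}$, and since a finite commutative unipotent group scheme is a $p$-group whose local-\'etale blocks all have local Cartier dual (the \'etale blocks are $p$-primary and so dualise to local multiplicative groups, while the local blocks dualise to local groups), $\widehat{H_u}$ is a local subgroup scheme of $\widehat{A}$.

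The main obstacle is the structural step ``anti-affine $\Rightarrow$ affine part is a torus'' in positive characteristic. Self-containedly, this reduces to ruling out anti-affine extensions $0 \to \bG_a \to \cG \to A \to 0$ in characteristic $p$. One can attack this by filtering $\pi_*\cO_\cG$ by fiber degree $F_n$ (with $F_n/F_{n-1} \cong \cO_A$), using the characteristic-$p$ identity $(t_j + f_{ij})^p = t_j^p + f_{ij}^p$ to identify the extension class of $F_p$ over $F_{p-1}$ with the image of $F^*\xi \in H^1(A, \cO_A)$, and then propagating through the tower to produce a nonconstant global section, contradicting anti-affinity.
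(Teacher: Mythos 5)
Your argument follows essentially the same route as the paper's proof: invoke \cite[Proposition 2.2]{Br09} (in positive characteristic an anti-affine group is semi-abelian, i.e.\ its affine part is a torus), note that $(\cG_u)_{\aff}$ maps trivially to $A$ and hence lies in the unipotent group $H_u$, so it is trivial and $\cG_u$ is an abelian variety, and then get the dual statement by dualizing the isogeny $\pi_u$, using that a finite commutative unipotent group scheme has local Cartier dual. So the main proof is correct and matches the paper. One caution about your optional ``self-contained'' sketch of the structural input: as stated it does not quite work, because the Frobenius-twisted class $F^*\xi$, and all its iterates, need not vanish in $H^1(A,\cO_A)$ (for an ordinary abelian variety the Frobenius action there is injective); to make the cocycle argument close, one must use that $\xi, F\xi, F^2\xi,\dots$ are linearly dependent in the finite-dimensional space $H^1(A,\cO_A)$, so some additive polynomial $P(F)$ kills $\xi$ and $P(t)$, corrected by local functions on $A$, gives the nonconstant global section --- or, more simply, use that $p$ kills $\bG_a$, so $p_{\cG}$ factors through $A$, its image $B$ is an abelian subvariety surjecting onto $A$, and $\cG/B\cong\bG_a$ contradicts anti-affineness. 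Since your actual proof relies on the citation, exactly as the paper does, this does not affect its validity.
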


\begin{proof}
If $\cG_u$ is anti-affine, then it is a semi-abelian variety by \cite[Proposition 2.2]{Br09}.
In other words, $(\cG_u)_{\aff}$ is a torus. But $(\cG_u)_{\aff}$ is contained in $H_u$, and hence 
is trivial: $\cG_u$ is an abelian variety. The remaining assertion follows by duality 
for isogenies of abelian varieties.
\end{proof} 

Next, we treat the diagonalizable case in arbitrary characteristic. Denote by $\widehat{H_s}$ 
the character group of $H_s$. Then any $\chi \in \widehat{H_s}$ yields an extension
$$
0 \longrightarrow \bG_m \longrightarrow \cG_s \times^{H_s} \bG_m \longrightarrow A \longrightarrow 0,
$$
the pushout of the extension (\ref{eqn:exts}) by $\chi : H_s \to \bG_m$. The line bundle
on $A$ associated with this $\bG_m$-bundle on $A$ is algebraically trivial (see e.g. 
\cite[VII Th\'eor\`eme 6]{Se59}) and hence yields a point $c(\chi) \in \widehat{A}(k)$. 
The so defined  map
$$
c : \widehat{H_s} \longrightarrow \widehat{A}(k)
$$
is easily seen to be a homomorphism of abstract groups, that classifies the extension 
(\ref{eqn:exts}).
  
\begin{lemma}\label{lem:dia}
With the above notation and assumptions, $\cG_s$ is anti-affine if and only if $c$ is injective.
\end{lemma}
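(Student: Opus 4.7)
The plan is to analyze $\cO(\cG_s)$ via its decomposition into weight spaces under the translation action of $H_s$ on $\cG_s$. Since $H_s$ is diagonalizable, every rational $H_s$-module splits canonically as a direct sum of weight spaces indexed by $\widehat{H_s}$; applying this to $\cO(\cG_s)$ yields
$$
\cO(\cG_s) \;=\; \bigoplus_{\chi \in \widehat{H_s}} \cO(\cG_s)_\chi,
$$
where $\cO(\cG_s)_\chi := \{\, f \in \cO(\cG_s) : f(h\gamma) = \chi(h) f(\gamma) \text{ for all } h \in H_s,\, \gamma \in \cG_s\,\}$ is the $\chi$-semi-invariant subspace.

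Next, I would identify each weight space $\cO(\cG_s)_\chi$ with the space of global sections of a specific line bundle on $A$. Unwinding the pushout construction $\cG_s \times^{H_s} \bG_m$ used in the definition of $c$, the associated line bundle $L_\chi$ on $A$ is $\cG_s \times^{H_s} \bA^1$, where $H_s$ acts on $\bA^1$ through $\chi$. Therefore a global section of $L_\chi$ over $A$ is exactly an $H_s$-equivariant morphism $\cG_s \to \bA^1$, that is, a $\chi$-semi-invariant regular function on $\cG_s$. This gives a natural identification $H^0(A, L_\chi) \cong \cO(\cG_s)_\chi$, and by the very construction of $c$, the class of $L_\chi$ in $\widehat{A}(k) = \Pic^0(A)$ is $c(\chi)$.

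I would then invoke the standard fact that for $L \in \Pic^0(A)$ on an abelian variety, $H^0(A, L) \cong k$ if $L$ is trivial and vanishes otherwise. Combined with the preceding identification, this gives $\dim_k \cO(\cG_s)_\chi = 1$ if $\chi \in \Ker(c)$ and $\dim_k \cO(\cG_s)_\chi = 0$ otherwise. Hence
$$
\cO(\cG_s) \;\cong\; \bigoplus_{\chi \in \Ker(c)} k,
$$
and $\cG_s$ is anti-affine, i.e.\ $\cO(\cG_s) = k$, if and only if $\Ker(c) = 0$, i.e.\ $c$ is injective.

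The only genuine subtlety is a bookkeeping one: matching the sign/direction conventions so that the weight space $\cO(\cG_s)_\chi$ really corresponds to $L_\chi$ rather than $L_{-\chi}$. This is just a question of tracking the two parallel uses of $\chi$ (translation action versus pushout of the extension) and poses no essential obstacle. The argument is close in spirit to the analysis of connected commutative extensions in \cite[Section 2]{Br09}, with the weight decomposition of a diagonalizable action playing the role that the unipotent universal extension plays in the analogue Lemma~\ref{lem:uni}.
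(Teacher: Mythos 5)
Your proof is correct, but it takes a genuinely different route from the paper. The paper does not compute $\cO(\cG_s)$ directly: it quotes \cite[Proposition 2.1 (i)]{Br09} for the case where $H_s$ is a torus, and then reduces the general diagonalizable case to that one by writing $H_s$ as an extension of a finite diagonalizable group scheme $F$ by a torus $T$, passing to the abelian variety $B = \cG_s/T$, and chasing the commutative diagram of dual exact sequences relating $c$ to the classifying map $c_0$ of the extension $0 \to T \to \cG_s \to B \to 0$. You instead prove the statement from scratch: you decompose $\cO(\cG_s)$ into $\widehat{H_s}$-weight spaces (valid for any diagonalizable group scheme, including non-reduced ones such as $\mu_p$, provided one reads the semi-invariance condition as a comodule condition), identify $\cO(\cG_s)_\chi$ with $H^0(A, L_\chi)$ where $L_\chi = \cG_s \times^{H_s} \bA^1$ has class $c(\chi)$ in $\widehat{A}(k)$ — algebraic triviality of $L_\chi$ being exactly what the paper's construction of $c$ via \cite[VII, Th\'eor\`eme 6]{Se59} supplies — and then use the vanishing of $H^0(A,L)$ for nontrivial $L \in \Pic^0(A)$, together with $\cO(\cG_s)_0 = \cO(A) = k$. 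This is essentially a self-contained reproof of the cited result of \cite{Br09} in the generality needed here, treating the torus and finite parts uniformly; what the paper's route buys is brevity by leveraging the known classification of anti-affine semi-abelian varieties and duality of isogenies, while yours buys independence from that reference at the cost of redoing its computation. Your sign/direction caveat is indeed harmless, since precomposing $c$ with inversion on $\widehat{H_s}$ does not affect injectivity.
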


\begin{proof}
This statement is exactly \cite[Proposition 2.1 (i)]{Br09} in the case that $H_s$ is a torus. 
The general case reduces to that one as follows: $H_s$ is an extension of a finite 
diagonalizable group scheme $F$ by a torus $T$, which yields an exact sequence of character groups
$$
0  \longrightarrow \widehat{F} \longrightarrow \widehat{H_s} \longrightarrow \widehat{T}  
\longrightarrow 0. 
$$
Also, the quotient $B := \cG_s/T$ is an abelian variety, and hence $\cG_s$ is a semi-abelian 
variety. Moreover, we have an exact sequence
$$
0 \longrightarrow F \longrightarrow B \longrightarrow A \longrightarrow 0,
$$
and the dual exact sequence fits into a commutative diagram 
$$
\CD
0  @>>> & \widehat{F}  @>>> & \widehat{H_s}  @>>> & \widehat{T}  @>>> 0 \\
& & & @V{=}VV & @V{c}VV & @V{c_0}VV \\
0  @>>> & \widehat{F}  @>>> & \widehat{A}  @>>> & \widehat{B} @>>> 0,  \\ 
\endCD
$$
where $c_0$ is the classifying homomorphism for the extension 
$0 \to T \to \cG_s \to B \to 0$. Now $\cG_s$ is anti-affine if and only if $c_0$ is injective;
this is equivalent to $c$ being injective. 
\end{proof}

Combining the above results yields the desired classification:

\begin{theorem}\label{thm:cla}
In characteristic $0$, the anti-affine extensions of $A$ by $H$ are classified 
by the pairs $(\delta, c)$, where $\delta : H_u^* \to H^1(A,\cO_A)$ is an injective linear map,
and $c: \widehat{H_s} \to \widehat{A}(k)$ is an injective homomorphism.

In positive characteristics, the anti-affine extensions of $A$ by $H$ exist
only if $H_u$ is finite; then they are classified by the pairs $(\delta, c)$, where 
$\delta: \widehat{H_u} \to \widehat{A}$ is a faithful homomorphism of group schemes, and 
$c: \widehat{H_s} \to \widehat{A}(k)$ is an injective homomorphism.
\end{theorem}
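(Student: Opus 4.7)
The plan is to reduce the classification of anti-affine extensions of $A$ by the commutative affine group scheme $H$ to the classification of its unipotent and diagonalizable parts separately, then apply the three preceding lemmas. Using $H = H_u \times H_s$ and writing $\cG_u := \cG/H_s$, $\cG_s := \cG/H_u$, we have the canonical isomorphism $\cG \cong \cG_u \times_A \cG_s$ noted before Lemma \ref{lem:prod}. The map $\cG \mapsto (\cG_u, \cG_s)$ is therefore a bijection between extensions of $A$ by $H$ and pairs of extensions of $A$ by $H_u$, $H_s$, and by Lemma \ref{lem:prod} it preserves anti-affineness. So the whole theorem follows once each factor is classified.

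For the diagonalizable factor, Lemma \ref{lem:dia} identifies anti-affine extensions of $A$ by $H_s$ with injective homomorphisms $c : \widehat{H_s} \to \widehat{A}(k)$, giving exactly the second component of the pair $(\delta,c)$ claimed in both characteristics. So the remaining work is to classify anti-affine extensions of $A$ by $H_u$.

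In characteristic $0$, $H_u$ is a vector group and extensions of $A$ by $H_u$ are classified by pushout from the universal extension $0 \to H(A) \to E(A) \to A \to 0$, that is, by linear maps $\gamma : H(A) \to H_u$. The criterion of \cite[Proposition 2.3]{Br09} quoted in the text says that the extension is anti-affine iff $\gamma$ is surjective. Dualizing, this is equivalent to $\delta := \gamma^{*} : H_u^{*} \to H(A)^{*} = H^1(A,\cO_A)$ being injective, which gives the first component of the pair $(\delta,c)$ in the characteristic $0$ statement.

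In positive characteristic, Lemma \ref{lem:uni} shows that an anti-affine extension forces $H_u$ to be finite and $\cG_u$ to be an abelian variety, and that the dual isogeny $\widehat{A} \to \widehat{\cG_u}$ has kernel $\widehat{H_u}$; this supplies a faithful homomorphism $\delta : \widehat{H_u} \to \widehat{A}$. Conversely, given any faithful $\delta : \widehat{H_u} \to \widehat{A}$, the quotient $\widehat{A}/\delta(\widehat{H_u})$ is an abelian variety whose Cartier dual $\cG_u$ is an abelian variety equipped with an isogeny $\pi_u : \cG_u \to A$ having kernel $H_u$; the resulting extension is anti-affine since $\cG_u$ is complete. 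These two constructions are mutually inverse, giving the bijection in positive characteristic. I do not expect a real obstacle beyond checking that the pushout/duality constructions at each step are natural enough to assemble, via the fibered product from Lemma \ref{lem:prod}, into a bijection between anti-affine extensions of $A$ by $H$ and pairs $(\delta,c)$ of the stated type; this last compatibility is essentially formal.
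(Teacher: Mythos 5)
Your proposal is correct and takes essentially the same route as the paper, which likewise obtains the theorem by combining Lemma \ref{lem:prod} (reduction to the factors $\cG_u$, $\cG_s$ via $\cG \cong \cG_u \times_A \cG_s$), the universal vector extension discussion in characteristic $0$ (with the anti-affineness criterion ``$\gamma$ surjective'' restated dually as injectivity of $\delta = \gamma^*$), Lemma \ref{lem:uni} in positive characteristic, and Lemma \ref{lem:dia} for the diagonalizable part. The only slip is terminological: in the positive-characteristic converse, the group $\cG_u$ is the \emph{dual abelian variety} of $\widehat{A}/\delta(\widehat{H_u})$ rather than its Cartier dual, but the duality-of-isogenies argument you invoke (the kernel of the dual isogeny being the Cartier dual of $\widehat{H_u}$, i.e.\ $H_u$) is exactly the intended one.
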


\begin{remarks}
(i) The above theorem yields a description of the image of the homomorphism
$\pi_* : \Aut^G(X) \to \Aut(A)$
for a homogeneous $G$-bundle $\pi: X \to A$. Indeed, since that image contains $A$, it suffices 
to determine its intersection with $\Aut_{\gp}(A)$. Let $X$ be given by a commutative 
subgroup scheme $H \subset G$ and data $\delta,c$ as above. Then $\varphi \in \Aut_{\gp}(A)$
admits a lift in $\Aut^G(X)$ if and only if there exists $z \in N_G(H)$ (the normalizer of
$H$ in $G$) such that the square 
$$
\CD
\widehat{H_s} @>{c}>> \widehat{A} \\
@V{\Int(z)}VV @V{\widehat{\varphi}}VV \\
\widehat{H_s} @>{c}>> \widehat{A} \\
\endCD
$$  
commutes, and the analogous square with $\delta$ commutes as well. Here $\Int(z)$ denotes 
the conjugation by $z$, that acts on $H$ and hence on $H_s$, $H_u$ and their Cartier duals.

For example, the multiplication by $-1$ in $A$ lifts to a $G$-automorphism of $X$ if and only if 
there exists $z \in N_G(H)$ such that $\Int(z)$ acts on $H$ by the inverse $h \mapsto h^{-1}$.

\smallskip

\noindent
(ii) In characteristic $0$, any unipotent vector bundle over $A$ can be written as
$E(A) \times^H V$ for some representation $V$ of the vector group $H(A)$,
uniquely determined up to conjugacy. Moreover, given two unipotent vector bundles
$E_1$, $E_2$ and denoting by $V_1$, $V_2$ the associated representations, we have
$$
\Hom_A(E_1,E_2) = H^0(A,E_1^* \otimes E_2) \cong (\cO(E(A)) \otimes V_1^* \otimes V_2)^{H(A)}
\cong \Hom^{H(A)}(V_1,V_2).
$$
Thus, \emph{the assignement $V \mapsto E(A) \times^{H(A)} V$ yields an equivalence of categories 
from finite-dimensional representations of $H(A)$ to unipotent vector bundles}.

Since the category of finite-dimensional $H(A)$-modules is equivalent to that of
modules of finite length over the local ring $\cO_{\widehat{A},0}$, we recover Mukai's 
result that \emph{the category of unipotent vector bundles is equivalent to that 
of coherent sheaves over $\widehat{A}$ supported at $0$} (see \cite[Theorem 4.12]{Muk79}). 
Note however that Mukai's proof, via the Fourier transform, is valid in all characteristics.

\smallskip

\noindent
(iii) Consider homogeneous $G$-bundles over an elliptic curve $E$ in characteristic $0$. 
Since $H^1(E,\cO_E) \cong k$, the data of $H_u$ and $\delta$ as above are equivalent 
to that of a unipotent element of $G$ which generates $H_u$. Also, $E \cong \widehat{E}$. 
It follows that 
\emph{the homogeneous $G$-bundles over $E$ are classified by the triples $(u,H_s,c)$ 
where $u \in G$ is unipotent, $H_s$ is a diagonalizable subgroup of the centralizer $C_G(u)$, 
and $c : \widehat{H_s} \to E(k)$ is an injective homomorphism, up to simultaneous conjugacy 
of $u$ and $H_s$ in $G$}.

This classification still holds for homogeneous $G$-bundles over an \emph{ordinary} 
elliptic curve $E$ in characteristic $p > 0$. 
Indeed, $\widehat{H_u}$ is then a local subgroup scheme of $E$ of order some
power $p^r$, and hence is isomorphic to $\mu_{p^r}$; equivalently, $H_u \cong \bZ/p^r \bZ$.
So the data of $H_u$ and $\delta$ are again equivalent to that of a unipotent element of $G$. 

As a consequence, in either case, the unipotent vector bundles of rank $n$ over $E$ are 
classified by the unipotent conjugacy classes in $\GL_n$. In particular, there is a unique
indecomposable such bundle up to isomorphism: the Atiyah bundle, associated with the 
unipotent class having one Jordan block. But this indecomposable bundle, viewed as a 
homogeneous vector bundle, has very different structures in characteristic $0$ and in 
positive characteristics. We refer to \cite{Sc09} for further results on this topic,
and to \cite{FMW98, La98, FM00} for developments on (possibly non-homogeneous) holomorphic 
principal bundles over complex elliptic curves. 
\end{remarks}

\address{Universit\'e Grenoble I\\
Institut Fourier\\ 
UMR 5582, BP 74\\
38402 Saint-Martin d'H\`eres Cedex\\ 
France}
{Michel.Brion@ujf-grenoble.fr}

\end{document}